\documentclass[11pt,reqno]{amsart}
\usepackage{amsmath}
\usepackage{amssymb}
\usepackage{amsthm}
\usepackage{graphicx}
\usepackage{mathrsfs}
\graphicspath{{picture/}}
\usepackage[margin=1in]{geometry}
\usepackage[colorlinks=true,allcolors=black]{hyperref}
\usepackage{enumerate}
\usepackage{eufrak}
\usepackage{slashed}
\usepackage{tikz}
\usepackage{indentfirst}
\usepackage{mathabx}
\title[LARGE-DATA INVERSE SCATTERING]{RECOVERY OF SCHR\"ODINGER NONLINEARITY FROM THE LARGE-DATA SCATTERING BEHAVIOR}
\author{YI SUN}
\address[Y. Sun]{Academy of Mathematics and Systems Science, The Chinese Academy of Sciences, Beijing 100190, CHINA}
\email{sunyi@amss.ac.cn}
\numberwithin{equation}{section}
\newtheorem{thm}{Theorem}
\numberwithin{thm}{section}
\newtheorem{prop}{Proposition}
\numberwithin{prop}{section}

\numberwithin{cor}{section}
\newtheorem{lem}{Lemma}
\numberwithin{lem}{section}
\theoremstyle{definition}
\newtheorem{mydef}{Definition}
\numberwithin{mydef}{section}

\theoremstyle{remark}
\newtheorem{rmk}{Remark}
\numberwithin{rmk}{section}

\renewcommand{\L}{\mathrm{L}}
\renewcommand{\i}{\mathrm{i}}
\renewcommand{\H}{\mathrm{H}}
\renewcommand{\S}{\mathrm{S}}

\newcommand{\I}{\mathrm{I}}
\newcommand{\e}{\mathrm{e}}
\newcommand{\W}{\mathrm{W}}

\begin{document}
	\begin{abstract}
		In this article, we study scattering and inverse scattering problems for 3D nonlinear Schr\"odinger equations of the form,
		$$\i\partial_t u + \Delta u = a(x)|u|^2u.$$
		For $a(x)$ in a suitable class, such equations are globally well-posed and admit a large-data scattering theory. In this work, we prove that the large-data scattering behavior uniquely determines the inhomogeneity $a(x)$.
	\end{abstract}
	
	\maketitle
	
	\section{Introduction}
	\subsection{Background and basic setting}
	We consider nonlinear Schr\"odinger equations of the form
	\begin{equation}\label{cubic-nls-1}
		\left\{\begin{aligned}
			\i\partial_t u + \Delta u &= a(x)|u|^2u,\\
			u(0,x) &= u_0(x) \in \H^1_x(\mathbb{R}^3),
		\end{aligned}\right.
	\end{equation}
	where $u(t,x): \mathbb{R}\times \mathbb{R}^3 \to \mathbb{C}$ and $a(x):\mathbb{R}^3\to [0,\infty)$ is a continuous function with $a, \nabla a\in \L^{\infty}_x(\mathbb{R}^3)$, and satisfies $x\cdot \nabla a \leq 0$.
	
	The equation \eqref{cubic-nls-1} enjoys several conservation laws.
	\begin{itemize}
		\item The gauge symmetry $u(t,x) \mapsto \e^{\i \theta}u(t,x)$ for $\theta \in \mathbb{R}$ corresponds to the conservation of the \textit{mass}, defined by
		\begin{equation}\label{conservation-mass}
			\mathcal{M}[u(t)] = \int_{\mathbb{R}^3} |u(t,x)|^2\ \mathrm{d}x.
		\end{equation}
		\item The time translation symmetry $u(t,x) \mapsto u(t+t_0,x)$ for $t_0\in \mathbb{R}$ corresponds to the conservation of \textit{energy}, defined by
		\begin{equation}\label{conservation-energy}
			\mathcal{E}[u(t)] = \tfrac{1}{2}\int_{\mathbb{R}^3} |\nabla u(t,x)|^2\ \mathrm{d}x + \tfrac{1}{4}\int_{\mathbb{R}^3} a(x)|u(t,x)|^4\ \mathrm{d}x.
		\end{equation}
	\end{itemize}
	
	\eqref{cubic-nls-1} has been studied in some previous works \cite{M-2023, S-1974, M. W-2018, R. W-2001-1, R. W-2001-2}. These scattering and inverse-scattering problems have been further studied in \cite{C-M-2024-1, C-M-2024-2, C-M-2025, K-M-V-2023, K-M-V-2025}. These works handled the following problems:
	\begin{enumerate}[\upshape(1)]
		\item Global well-posedness of the scattering solutions for a suitable class of initial data;
		\item Recovery of the nonlinearity from the behavior of the scattering map.
	\end{enumerate}
	
	However, previous works mostly concentrated on the small-data scattering case. In \cite{M-2023}, Murphy established the small-data scattering in $\H^1_x$ in the inter-critical case and recovered the nonlinearity from the scattering map in the small-data setting. It is of interest to extend this result to the large-data case.
	
	In \cite{M. W-2018}, Watanabe established a large-data $\H^1_x$ scattering theory in the 3D inter-critical regime by Morawetz estimates with coefficients $a(x)$ satisfying a \textit{repulsivity} condition. In this article, we directly cite the large-data scattering result established by Watanabe. However, Watanabe's inverse scattering result requires additional regularity and spatial decay assumptions. Our result, in the cubic case, removes these additional assumptions and establishes uniqueness of the coefficient from the local behavior of the scattering map near a nonzero scattering state.
	
	In \cite{S-1974}, Strauss demonstrated that knowledge of the scattering map suffices to determine integrals of the form
	\begin{equation}\label{key_integral}
		\int_{\mathbb{R}}\langle a|\e^{\i t \Delta}\varphi|^p \e^{\i t\Delta}\varphi, \e^{\i t\Delta}\psi \rangle\ \mathrm{d}t,
	\end{equation}
	which may be used to recover the coefficient $a(x)$ pointwise. In \cite{M-2023}, Murphy reduced the inverse problem to showing that knowledge of the integrals uniquely determines the coefficient $a(x)$. Murphy used the \textit{Born approximation}, expanding $\langle S_a(\varepsilon \varphi), \psi \rangle$ and calculating the coefficient of $\varepsilon^{p+1}$ if the nonlinearity is of the form $a|u|^p u$. For the final step, determining the coefficient $a(x)$ by knowledge of \eqref{key_integral}, Murphy specialized to the case of Gaussian data, for which the free Schr\"odinger evolution can be computed explicitly. Then the final step is completed by evaluating a Gaussian integral.
	
	The main idea of this article is to use the Born approximation in the large-data setting. In this case, we need to recover the nonlinearity from knowledge of scattering behavior near a given scattering state $u_{-}\in \H^1_x$ and $u_{-} \nequiv 0$. Thus, for $v_{-} = u_{-} + \varepsilon \varphi$, we need to expand
	$$\langle S_{a}(v_{-}) - S_{a}(u_{-}), \psi \rangle$$
	instead and extract the coefficient of $\varepsilon^3$. In this way, we find that $a(x)$ still can be uniquely determined from knowledge of integrals of the form \eqref{key_integral}. Another contribution of this article is in the final step. In this large-data case, to determine the inhomogeneity $a(x)$, we need a family of Gaussian data, translated by the free Schr\"odinger group, to test \eqref{key_integral}. Then by comparing the absolute value of \eqref{key_integral} with the absolute values of other integrals, the final step is reduced to evaluating a Gaussian integral, as in the final step of \cite{M-2023}.
	
	\subsection{Organization of the Paper}
	\begin{itemize}
		\item In section 2, we establish the global well-posedness in Theorem \ref{GWP}.
		\item In section 3, we clarify the definition of the scattering map and expand perturbation terms in preparation for proving the main theorem.
		\item In section 4, we prove our main result Theorem \ref{S-map-determine-nonlinearity}.
	\end{itemize}
	The main contribution of the paper is in section 4. The results in section 2 and section 3 can be proved based on classical theory in dispersive equations with no difficulty, and we believe these results for \eqref{cubic-nls-1} are well known. We give proofs here only to make the paper more self-contained.
	
	\subsection{Notations}
	We use $C$ to denote the universal constant and $C$ may change line by line. We write $A\lesssim B$ if $A\leq CB$ for some universal constant $C>0$. We write $A\sim B$, if both $A\lesssim B$ and $B\lesssim A$ hold.
	
	Our definition for the Fourier transform is
	$$\hat{f}(\xi) = (2\pi)^{-\frac{3}{2}}\int_{\mathbb{R}^3} f(x) \e^{-\i\xi\cdot x}\ \mathrm{d}x,$$
	so that
	$$f(x) = (2\pi)^{-\frac{3}{2}}\int_{\mathbb{R}^3} \hat{f}(\xi)\e^{\i\xi\cdot x}\ \mathrm{d}\xi\quad \text{and}\quad \int_{\mathbb{R}^3} |\hat{f}(\xi)|^2\ \mathrm{d}\xi = \int_{\mathbb{R}^3} |f(x)|^2\ \mathrm{d}x.$$
	
	The free Schr\"odinger group is defined by $\e^{\i t\Delta} := \mathcal{F}^{-1}\e^{-\i t |\xi|^2}\mathcal{F}$, where $\mathcal{F}$ denotes the Fourier transform.
	
	We use $\L^p_t\L^q_x(T\times X)$ to denote the mixed Lebesgue spacetime norm
	$$\|f\|_{\L^p_t\L^q_x(T\times X)} = \left\|\|f(t,x)\|_{\L^q_x(X)}\right\|_{\L^p_t(T)} = \left[\int_T\left(\int_X |f(t,x)|^q\ \mathrm{d}x\right)^{p/q}\ \mathrm{d}t\right]^{1/p}.$$
	When $p=q$, we abbreviate $\L^p_{t,x} = \L^p_t\L^q_x$. When $X=\mathbb{R}^3$, we abbreviate $\L^p_T\L^q_x = \L^p_t\L^q_x(T) = \L^p_t\L^q_x(T\times \mathbb{R}^3)$. Throughout this article, unless explicitly stated, $\L^p_t\L^q_x$ stands for $\L^p_t\L^q_x(\mathbb{R}\times \mathbb{R}^3)$.
	
	Fix $s\geq 0$. The inhomogeneous Sobolev space $\H^s_x(\mathbb{R}^3)$ is the completion of the Schwartz space $\mathcal{S}(\mathbb{R}^3)$ with respect to the norm
	$$\|f\|^2_{\H^s_x(\mathbb{R}^3)} = \|\langle \nabla \rangle^s f\|^2_{\L^2_x(\mathbb{R}^3)} = \int_{\mathbb{R}^3} (1 + |\xi|^2)^s|\hat{f}(\xi)|^2\ \mathrm{d}\xi < \infty.$$
	Sobolev space $\W^{1,\infty}_x(\mathbb{R}^3)$ is defined by
	$$\W^{1,\infty}_x(\mathbb{R}^3) := \left\{f\in \L^{\infty}_x(\mathbb{R}^3):\ \partial_{x_i} f \in \L^{\infty}_x(\mathbb{R}^3),\ i = 1,2,3\right\}$$
	with norm
	$$\|f\|_{\W^{1,\infty}_x(\mathbb{R}^3)} := \|f\|_{\L^{\infty}_x(\mathbb{R}^3)} + \|\nabla f\|_{\L^{\infty}_x(\mathbb{R}^3)}.$$
	\subsection{Acknowledgment}
	This research was partially supported by NSFC, Grant No. 12471232, 12288201, CAS Project for Young Scientists in Basic Research, Grant No. YSBR-031. The author is very grateful to his advisor Professor Chenjie Fan for patient and insightful guidance and helpful comments. This work has greatly benefited from his valuable suggestions and support.
	
	\section{Global well-posedness}
	In this section, we establish the global well-posedness result for \eqref{cubic-nls-1}. We use standard contraction mapping arguments based on Strichartz estimates and the energy method (see e.g. \cite{T-2006}). The only modification is to take into account the inhomogeneity.
	\begin{prop}[Strichartz estimates for Schr\"odinger, $d=3$, c.f. \cite{G-V-1989,K-T-1998,Y-1987}]\label{strichartz-est}
		Call a pair $(p,q)$ Schr\"odinger admissible for $d=3$ if $2\leq p,q\leq \infty$ and $\frac{2}{p}+\frac{3}{q} = \frac{3}{2}$. Then for any admissible pair $(p,q)$ and $(\tilde{p},\tilde{q})$, we have homogeneous Strichartz estimate
		\begin{equation}\label{homogeneous-Strichartz-estimate}
			\|\e^{\i t\Delta}u_0\|_{\L^p_t\L^q_x(\mathbb{R}\times \mathbb{R}^3)}\lesssim_{p,q}\|u_0\|_{\L^2_x(\mathbb{R}^3)},
		\end{equation}
		the dual homogeneous Strichartz estimate
		\begin{equation}\label{dual-homogeneous-Strichartz-estimate}
			\left\|\int_{\mathbb{R}}\e^{-\i s\Delta}F(s)\ \mathrm{d}s\right\|_{\L^2_x(\mathbb{R}^3)}\lesssim_{\tilde{p},\tilde{q}}\|F\|_{\L^{\tilde{p}'}_t\L^{\tilde{q}'}_x(\mathbb{R}\times\mathbb{R}^3)},
		\end{equation}
		and the inhomogeneous Strichartz estimate
		\begin{equation}\label{inhomogeneous-Strichartz-estimate}
			\left\|\int_{s<t}\e^{\i(t-s)\Delta}F(s)\ \mathrm{d}s\right\|_{\L^p_t\L^q_x(\mathbb{R}\times \mathbb{R}^3)}\lesssim_{p,q,\tilde{p},\tilde{q}}\|F\|_{\L^{\tilde{p}'}_t\L^{\tilde{q}'}_x(\mathbb{R}\times\mathbb{R}^3)}.
		\end{equation}
	\end{prop}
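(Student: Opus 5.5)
The plan is the standard $TT^*$ argument combined with the dispersive estimate and Hardy--Littlewood--Sobolev in time, treating the endpoint $(p,q)=(2,6)$ separately via the Keel--Tao argument.

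\textbf{Homogeneous and dual estimates.} The starting point is the dispersive bound
$$\|\e^{\i t\Delta}f\|_{\L^\infty_x}\lesssim |t|^{-3/2}\|f\|_{\L^1_x}.$$
It follows from the explicit kernel $(4\pi \i t)^{-3/2}\e^{\i|x-y|^2/4t}$. Interpolating with unitarity on $\L^2_x$ gives
$$\|\e^{\i t\Delta}f\|_{\L^q_x}\lesssim |t|^{-3(\frac12-\frac1q)}\|f\|_{\L^{q'}_x}.$$
Set $T=\e^{\i t\Delta}:\L^2_x\to \L^p_t\L^q_x$. Then \eqref{homogeneous-Strichartz-estimate} is equivalent to boundedness of $TT^*F=\int_{\mathbb{R}}\e^{\i(t-s)\Delta}F(s)\,\mathrm{d}s$ from $\L^{p'}_t\L^{q'}_x$ to $\L^p_t\L^q_x$.

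By the interpolated dispersive bound and Minkowski,
$$\|TT^*F(t)\|_{\L^q_x}\lesssim \int |t-s|^{-2/p}\|F(s)\|_{\L^{q'}_x}\,\mathrm{d}s,$$
using $3(\tfrac12-\tfrac1q)=\tfrac2p$. For $2<p<\infty$, i.e.\ $0<2/p<1$, one-dimensional Hardy--Littlewood--Sobolev maps $\L^{p'}_t\to\L^p_t$, since $1+\tfrac1p=\tfrac1{p'}+\tfrac2p$. The case $p=\infty$, $q=2$ is just unitarity. The dual estimate \eqref{dual-homogeneous-Strichartz-estimate} is exactly the boundedness of $T^*$ from $\L^{\tilde p'}_t\L^{\tilde q'}_x$ to $\L^2_x$, so it follows by duality from the homogeneous one.

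\textbf{Inhomogeneous estimate.} For non-endpoint pairs, I first compose $T_{p,q}T^*_{\tilde p,\tilde q}$ to get the untruncated operator bounded from $\L^{\tilde p'}_t\L^{\tilde q'}_x$ to $\L^p_t\L^q_x$. I then pass to the retarded operator $\int_{s<t}$ using the Christ--Kiselev lemma, which applies because $\tilde p'<p$ whenever not both exponents equal $2$. When $\tilde p=p$ with $p$ non-endpoint, one can also argue directly: the truncated kernel is dominated by the same HLS bound.

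\textbf{Main obstacle: the endpoint $p=2$, $q=6$.} Here HLS fails, since the kernel $|t-s|^{-1}$ is not integrable, and Christ--Kiselev fails when $p=\tilde p'=2$. For this case I would follow Keel--Tao. First decompose the bilinear form $\iint_{s<t}\langle \e^{-\i s\Delta}F(s),\e^{-\i t\Delta}G(t)\rangle\,\mathrm{d}s\,\mathrm{d}t$ dyadically in $|t-s|\sim 2^j$. On each piece, obtain bilinear bounds $2^{-j\beta(a,b)}\|F\|_{\L^2_t\L^{a'}_x}\|G\|_{\L^2_t\L^{b'}_x}$ for $(1/a,1/b)$ in a neighbourhood of $(1/6,1/6)$. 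These come from interpolating the dispersive estimate, which is strong at $a=b=\infty$, with energy estimates, which are strong at $a=b=2$. Finally, sum the pieces using real (bilinear) interpolation in Lorentz spaces: the exponent $\beta$ changes sign across $(1/6,1/6)$, so the dyadic sum converges at the endpoint.

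Since $d=3>2$, this argument goes through, and mixed endpoint/non-endpoint pairs follow by the same scheme. All the cases then combine to give the proposition.
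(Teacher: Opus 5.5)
Your outline is correct and is essentially the standard argument from the works the paper cites; the paper itself gives no proof of this proposition. That argument is dispersive decay with $TT^*$ and one-dimensional Hardy--Littlewood--Sobolev for $2<p\le\infty$, Christ--Kiselev for the retarded operator when $\tilde p'<p$, and Keel--Tao for the double endpoint $(2,6)$, whose bound on the truncated bilinear form also gives the homogeneous endpoint estimate by the symmetry $s\leftrightarrow t$.

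One correction to your endpoint sketch. Interpolating the dispersive bound at $(1/a,1/b)=(0,0)$ with the energy bound at $(\tfrac12,\tfrac12)$ only gives the dyadic estimates on the diagonal $a=b$. That is not enough for the bilinear real interpolation, which needs three non-collinear exponent points. Keel--Tao get the off-diagonal bounds at $(a,2)$, $2\le a<6$, by localizing to time intervals of length $\sim 2^j$ and combining the non-endpoint dual estimate you already proved with H\"older in time.
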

	\subsection{Local well-posedness}
	We construct the local solution by Duhamel formula. Let $u_0\in \H^1(\mathbb{R}^3)$. Fix $T>0$ which will be chosen later. Set
	\begin{equation}\label{Duhamel}
		u(t) = \Phi u(t) := \e^{\i t\Delta}u_0 - \i \int_{0}^{t} \e^{\i (t-s)\Delta}[a(x)(|u|^2u)(s,x)]\ \mathrm{d}s
	\end{equation}
	where $0\leq t \leq T$.
	
	Define the Strichartz norm by
	\begin{equation}\label{Strichartz-norm}
		\|u\|_{\S^1} = \sup_{\text{$(q,r)$ admissible}}\|\langle \nabla \rangle u\|_{\L^q_t\L^r_x}.
	\end{equation}
	
	Define $\mathcal{B}$ to be the set of functions $u: [0,T]\times \mathbb{R}^3 \to \mathbb{C}$ satisfying the bound
	\begin{equation}\label{B-space-bound}
		\|u\|_{\S^1}\leq 4C\|u_0\|_{\H^1_x}
	\end{equation}
	where $C$ is an absolute constant. We equip $\mathcal{B}$ with the metric
	$$\mathrm{d}(u,v) = \sup_{\text{$(q,r)$ admissible}}\|u-v\|_{\L^{q}_t\L^{r}_x}.$$
	By Strichartz estimates, H\"older's inequality and fractional Leibniz's rules,
	\begin{align}\label{B-to-B}
		&\|\Phi u\|_{\S^1} \lesssim \|u_0\|_{\H^1_x} + \|\langle \nabla \rangle a|u|^2u\|_{\L^2_t\L^{6/5}_x}\notag \\
		&\lesssim \|u_0\|_{\H^1_x} + T^{1/5}\left(\|a\|_{\W^{1,\infty}_x}\|u\|^2_{\L^{10}_t\L^{5}_x}\|u\|_{\L^{10}_t\L^{30/13}_x} + \|a\|_{\L_x^{\infty}}\|u\|^2_{\L^{10}_t\L^{5}_x}\|\langle \nabla \rangle u\|_{\L^{10}_t\L^{30/13}_x}\right)
	\end{align}
	By Sobolev embedding,
	\begin{equation}\label{Sobolev-embedding-1}
		\|u\|_{\L^{10}_t\L^{5}_x}\lesssim \|\langle \nabla \rangle u\|_{\L^{10}_t\L^{30/13}_x}.
	\end{equation}
	Thus by \eqref{B-to-B}, choosing $T$ small enough depending on $\smash{\|a\|_{\W^{1,\infty}_x}}$, $\smash{\|u_0\|_{\H^1_x}}$ and some absolute constants, we can ensure
	\begin{equation}\label{B-to-B-2}
		\|\Phi u\|_{\S^1} \leq 4C\|u_0\|_{\H^1_x}.
	\end{equation}
	For $u,v\in \mathcal{B}$, using elementary estimate
	$$\left||u|^2u - |v|^2v\right| \lesssim \left(|u|^2 + |v|^2\right)|u-v|,$$
	we obtain
	\begin{align}\label{contraction-estimate}
		\mathrm{d}\left(\Phi u,\Phi v\right) &\lesssim \|a|u|^2u - a|v|^2v\|_{\L^2_t\L^{6/5}_x}\notag \\
		&\lesssim T^{1/5}\|a\|_{\L^{\infty}_x}\left(\|u\|^2_{\L^{10}_t\L^{5}_x} + \|v\|^2_{\L^{10}_t\L^{5}_x}\right)\|u-v\|_{\L^{10}_t\L^{30/13}_x}.
	\end{align}
	By \eqref{Sobolev-embedding-1}, we can choose $T$ small enough depending on $\smash{\|a\|_{\W^{1,\infty}_x}}$, $\smash{\|u_0\|_{\H^1_x}}$ and some absolute constants so that
	\begin{equation}\label{contraction-estimate-2}
		\mathrm{d}\left(\Phi u,\Phi v\right) \leq \tfrac{1}{2}\mathrm{d}(u,v).
	\end{equation}
	Then contraction mapping principle ensures the existence and uniqueness of local solution to \eqref{cubic-nls-1}. 
	\subsection{Global well-posedness}
	Now we are ready to establish global well-posedness by the energy method. We use the Gagliardo-Nirenberg inequality:
	\begin{lem}[Gagliardo-Nirenberg inequality for $d=3$]\label{G-N-inequality-d=3-lem}
		For $u\in \H^1_x(\mathbb{R}^3)$,
		\begin{equation}\label{G-N-inequality-d=3-eq}
			\|u\|_{\L^4_x(\mathbb{R}^3)}\lesssim \|u\|^{1/4}_{\L^2_x(\mathbb{R}^3)}\|u\|^{3/4}_{\dot{\H}^1_x(\mathbb{R}^3)}.
		\end{equation}
	\end{lem}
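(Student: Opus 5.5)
We will prove \eqref{G-N-inequality-d=3-eq} by interpolating between $\L^2_x$ and the endpoint Sobolev space $\L^6_x$, and then using the Sobolev embedding $\dot{\H}^1_x(\mathbb{R}^3)\hookrightarrow \L^6_x(\mathbb{R}^3)$. It suffices to prove the estimate for $u\in\mathcal{S}(\mathbb{R}^3)$; the general case $u\in\H^1_x$ then follows by density. Indeed, both sides are continuous with respect to $\H^1_x$ convergence: the left side is continuous once the inequality is known on a dense class, because the inequality itself shows that $\L^4_x$ is controlled by the $\H^1_x$ norm, and Fatou's lemma along an a.e.\ convergent subsequence handles the limit.

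\textbf{Step 1 (H\"older interpolation).} Since $\frac14=\frac{\theta}{2}+\frac{1-\theta}{6}$ with $\theta=\frac14$, H\"older's inequality applied to $|u|^4=|u|^{1}\cdot|u|^{3}$, with exponents $2$ and $2$ on $|u|$ and $|u|^3$, gives
$$\int_{\mathbb{R}^3}|u|^4\ \mathrm{d}x\le \|u\|_{\L^2_x}\,\|u\|^3_{\L^6_x}.$$
Hence $\|u\|_{\L^4_x}\le \|u\|^{1/4}_{\L^2_x}\|u\|^{3/4}_{\L^6_x}$.

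\textbf{Step 2 (Sobolev embedding).} We invoke the three-dimensional Sobolev inequality $\|u\|_{\L^6_x}\lesssim\|\nabla u\|_{\L^2_x}=\|u\|_{\dot{\H}^1_x}$. The exponent is forced by scaling: under $u\mapsto u(\lambda\cdot)$, the norm $\|u\|_{\L^6_x}$ scales like $\lambda^{-1/2}$ and $\|\nabla u\|_{\L^2_x}$ scales like $\lambda^{1-3/2}=\lambda^{-1/2}$. If one wants this to be self-contained, the cleanest route is Hardy--Littlewood--Sobolev: write $u=c\,|x|^{-2}*(|\nabla|u)$, i.e.\ $|\nabla|^{-1}$ is convolution with $c|x|^{-2}$ in $d=3$, and use the bounded map $\L^2_x\to\L^6_x$ given by $\frac16=\frac12-\frac13$. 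Alternatively, one can use the Gagliardo--Nirenberg--Sobolev inequality $\|f\|_{\L^{3/2}_x}\lesssim\|\nabla f\|_{\L^1_x}$ applied to $f=|u|^4$, which gives $\|u\|^4_{\L^6_x}\lesssim \int |u|^3|\nabla u|\ \mathrm{d}x\le \|u\|^3_{\L^6_x}\|\nabla u\|_{\L^2_x}$, and then divide by $\|u\|^3_{\L^6_x}$.

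\textbf{Step 3 (combine).} Substituting Step 2 into Step 1 yields \eqref{G-N-inequality-d=3-eq}.

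The only nontrivial ingredient is the endpoint Sobolev embedding in Step 2. Since this is a classical result, we would simply cite it, and the remaining steps are routine.
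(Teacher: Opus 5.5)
Your proof is correct. The paper states this lemma without proof as a classical fact, and your argument (H\"older's inequality giving $\|u\|_{\L^4_x}\le\|u\|^{1/4}_{\L^2_x}\|u\|^{3/4}_{\L^6_x}$, then the Sobolev embedding $\dot{\H}^1_x(\mathbb{R}^3)\hookrightarrow\L^6_x(\mathbb{R}^3)$ by either of your two routes) is the standard proof being implicitly invoked. The density remark is loosely worded: what you actually use is continuity of the right-hand side under $\H^1_x$ convergence plus Fatou's lemma for the left-hand side, which is sound, and the step can be dropped if you cite Sobolev directly on $\H^1_x$, since Step 1 holds for any measurable $u$.
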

	
	\begin{thm}\label{GWP}
		The Cauchy problem \eqref{cubic-nls-1} is globally well-posed.
	\end{thm}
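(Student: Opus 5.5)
The plan is to combine the local theory just established with an a priori bound on $\|u(t)\|_{\H^1_x}$ coming from the conservation laws \eqref{conservation-mass} and \eqref{conservation-energy}. The key observation from the local well-posedness argument is that the existence time $T$ depends only on $\|a\|_{\W^{1,\infty}_x}$ and $\|u_0\|_{\H^1_x}$ (through \eqref{B-to-B-2} and \eqref{contraction-estimate-2}). Therefore a uniform-in-time bound on the $\H^1_x$ norm lets us iterate the local result on intervals of fixed length and reach all of $\mathbb{R}$; backward time is handled identically by time reversal.

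\textbf{Step 1: conservation laws.} We first justify that the local solution satisfies $\mathcal{M}[u(t)]=\mathcal{M}[u_0]$ and $\mathcal{E}[u(t)]=\mathcal{E}[u_0]$. For smooth, rapidly decaying solutions this is the usual computation: pair the equation with $\bar u$ (respectively $\partial_t\bar u$) and take the imaginary (respectively real) part. The coefficient $a$ is time-independent, so $\partial_t$ does not hit it, and the term $\tfrac14\int a|u|^4$ differentiates correctly. For general $\H^1_x$ data we approximate $u_0$ by Schwartz functions, or by frequency-truncated data; this requires persistence of regularity for the truncated problem, which uses $\nabla a\in\L^\infty_x$. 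We then pass to the limit using the continuous dependence that comes from the contraction estimate \eqref{contraction-estimate}. This justification is the only genuinely technical point, and I expect it to be the main obstacle. An alternative route is to regularize with $P_{\leq N}$ inside the nonlinearity and take commutator limits. Since $a\in\W^{1,\infty}_x$, though, standard arguments as in \cite{T-2006} go through verbatim.

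\textbf{Step 2: a priori bound.} Since $a\geq 0$, the energy is coercive:
$$\tfrac12\|\nabla u(t)\|^2_{\L^2_x}\leq \mathcal{E}[u(t)]=\mathcal{E}[u_0]\lesssim \|\nabla u_0\|^2_{\L^2_x}+\|a\|_{\L^\infty_x}\|u_0\|^4_{\L^4_x}.$$
By Lemma \ref{G-N-inequality-d=3-lem} the right-hand side is finite, with $\|u_0\|^4_{\L^4_x}\lesssim \|u_0\|_{\L^2_x}\|u_0\|^3_{\dot{\H}^1_x}$. Combined with mass conservation, this gives
$$\sup_t\|u(t)\|_{\H^1_x}\leq K\bigl(\|u_0\|_{\H^1_x},\|a\|_{\L^\infty_x}\bigr)<\infty.$$
The defocusing sign is exactly what makes this step trivial.

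\textbf{Step 3: iteration.} Let $T_*=T(K,\|a\|_{\W^{1,\infty}_x})>0$ be the local existence time for data of $\H^1_x$ norm at most $K$. Starting from any time $t_0$ in the existence interval, the local theory with data $u(t_0)$ extends the solution to $[t_0,t_0+T_*]$, and uniqueness glues the pieces together. Hence the maximal lifespan is infinite in both directions. Uniqueness in the class $C_t\H^1_x\cap \S^1_{loc}$ and continuous dependence on compact time intervals follow from the contraction estimate \eqref{contraction-estimate} applied on each subinterval. This gives global well-posedness.
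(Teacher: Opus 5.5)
Your proposal is correct and follows the paper's own argument. Coercivity of the energy (from $a\geq 0$), mass conservation and Gagliardo--Nirenberg give a uniform $\H^1_x$ bound, and since the local existence time depends only on $\|a\|_{\W^{1,\infty}_x}$ and the $\H^1_x$ norm of the data, the solution extends in steps of fixed length.

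Your Step 1, justifying the conservation laws by approximation, is extra rigor that the paper takes for granted. If you carry it out, get $\H^2_x$ persistence by the time-derivative (Kato) method, which needs only $a\in\L^\infty_x$; differentiating twice in space would require $\nabla^2 a$, which the hypotheses do not provide.
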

	\begin{proof}
		By \eqref{conservation-mass} and \eqref{conservation-energy}, we can conclude that
		\begin{align}\label{u-H^1-est}
			\|u\|^2_{\H^1_x} &\lesssim \mathcal{M}[u(t)] + \mathcal{E}[u(t)] \sim \mathcal{M}[u_0] + \mathcal{E}[u_0]\notag \\
			&\lesssim \|u_0\|^2_{\L^2_x} + \|a\|_{\L^{\infty}_x}\|u_0\|_{\L^2_x}\|u_0\|^3_{\dot{\H}^1_x}\lesssim \|u_0\|^2_{\H^1_x}\left(1+\|a\|_{\L^{\infty}_x}\|u_0\|^2_{\H^1_x}\right).
		\end{align}
		The local result ensures that the solution can be extended globally in steps of fixed  time length $T$ and the proof of Theorem \ref{GWP} is complete.
	\end{proof}
	
	\section{The direct scattering problem}
	\subsection{Scattering of the global solutions}
	The following scattering result is established by Watanabe in \cite{M. W-2018}:
	\begin{thm}\label{large_data_sacttering}
		The global solutions to \eqref{cubic-nls-1} {\rm scatter}, which means for every global solution $u(t)$ to \eqref{cubic-nls-1}, there exist unique $u_{-}$ and $u_{+}$ in $\H^1_x(\mathbb{R}^3)$ such that
		\begin{equation}\label{scattering_def_eq}
			\lim_{t\to \pm \infty}\|u(t) - \e^{\i t\Delta}u_{\pm}\|_{\H^1_x} = 0.
		\end{equation}
	\end{thm}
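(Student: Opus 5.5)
The theorem is Watanabe's result, which the paper cites. Still, here is the plan I would follow to prove it directly. The idea is to combine the global $\H^1_x$ bound from Theorem \ref{GWP} with a Morawetz-type estimate that uses the repulsivity $x\cdot\nabla a\le 0$. From there I would deduce global spacetime bounds, and these imply scattering via the Duhamel formula.

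\emph{Step 1: Morawetz estimate.} Take the Morawetz action
\[
M(t)=\operatorname{Im}\int \frac{x}{|x|}\cdot\nabla u\,\bar u\,\mathrm{d}x,
\]
which satisfies $|M(t)|\lesssim \|u\|_{\L^\infty_t\H^1_x}^2$, a quantity bounded by the energy and mass. Differentiating in time gives the usual positive terms: the angular part of the gradient, and the $\delta$-term $|u(t,0)|^2$ in $d=3$. It also gives a nonlinear term of the form
\[
\int \frac{a(x)}{|x|}|u|^4\,\mathrm{d}x \;-\;\tfrac12\int \frac{x\cdot\nabla a}{|x|}|u|^4\,\mathrm{d}x,
\]
up to positive constants. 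Both pieces are nonnegative precisely because $a\ge0$ and $x\cdot\nabla a\le 0$. Integrating in time yields
\[
\int_{\mathbb{R}}\int \frac{a|u|^4}{|x|}\,\mathrm{d}x\,\mathrm{d}t\lesssim C(\|u_0\|_{\H^1_x}).
\]
To get a translation-invariant decay quantity, I would instead use the interaction Morawetz estimate (Colliander--Keel--Staffilani--Takaoka--Tao). Here the same sign conditions make the nonlinear contribution nonnegative, and one obtains $\|u\|_{\L^4_{t,x}}^4\lesssim \|u\|_{\L^\infty_t\L^2_x}^3\|u\|_{\L^\infty_t\dot{\H}^1_x}$. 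The derivative falling on $a$ appears with sign $-x\cdot\nabla a\ge0$ after symmetrization. I expect checking this symmetrized sign to be the main obstacle, since the weight is $(x-y)\cdot\nabla a(x)$ rather than $x\cdot\nabla a(x)$. If it fails, the fallback is Watanabe's route: concentration-compactness/rigidity, with the one-particle Morawetz above used to exclude a compact critical element.

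\emph{Step 2: Strichartz bounds.} Interpolate the global $\L^4_{t,x}$ bound with $\L^\infty_t\H^1_x$ to control, say, $\|u\|_{\L^{10}_t\L^5_x}$ piecewise. Then split $\mathbb{R}$ into finitely many intervals where this norm is small. On each interval, rerun the estimate \eqref{B-to-B} without the $T^{1/5}$ factor, using $\|a\|_{\W^{1,\infty}_x}$. A bootstrap then gives $\|u\|_{\S^1(\mathbb{R})}<\infty$.

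\emph{Step 3: Scattering.} Define
\[
u_\pm=u_0-\i\int_0^{\pm\infty}\e^{-\i s\Delta}\big(a|u|^2u\big)(s)\,\mathrm{d}s.
\]
The dual Strichartz estimate \eqref{dual-homogeneous-Strichartz-estimate} in $\H^1_x$ controls the tails $\|\e^{-\i t\Delta}u(t)-u_\pm\|_{\H^1_x}$ by $\|\langle\nabla\rangle(a|u|^2u)\|_{\L^2_t\L^{6/5}_x((t,\pm\infty))}\to0$. Since $\e^{\i t\Delta}$ is unitary on $\H^1_x$, this gives \eqref{scattering_def_eq}. Uniqueness of $u_\pm$ follows from the same unitarity.
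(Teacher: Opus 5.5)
\textbf{Comparison with the paper.} The paper does not prove this theorem. It imports it from Watanabe's work and describes that work only as a Morawetz-estimate argument under the repulsivity condition $x\cdot\nabla a\le 0$. Your Steps 2 and 3 are a correct standard back end. A global $\L^4_{t,x}$ bound gives $\|u\|^{10}_{\L^{10}_t\L^5_x}\le \|u\|^4_{\L^4_{t,x}}\|u\|^6_{\L^\infty_t\L^6_x}$. The interval-splitting bootstrap then gives $\|u\|_{\S^1}<\infty$. The dual Strichartz estimate together with unitarity of $\e^{\i t\Delta}$ on $\H^1_x$ gives existence and uniqueness of $u_\pm$. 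The gap is in Step 1: nothing you establish there produces a global spacetime bound.

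\textbf{The interaction Morawetz step fails.} Because $a$ is real, the mass density still satisfies $\partial_t|u|^2=-2\nabla\cdot\operatorname{Im}(\bar u\nabla u)$. So $a$ enters only through the one-particle identity recentred at $y$ and averaged against $|u(t,y)|^2$. Up to a positive constant, the nonlinear contribution is
\[
\int_{\mathbb{R}}\iint |u(t,y)|^2\Big[\frac{a(x)}{|x-y|}-\frac{(x-y)\cdot\nabla a(x)}{2|x-y|}\Big]|u(t,x)|^4\,\mathrm{d}x\,\mathrm{d}y\,\mathrm{d}t .
\]
No symmetrization is available, because the integrand pairs $|u(y)|^2$ with $|u(x)|^4$. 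The hypothesis $x\cdot\nabla a(x)\le 0$ says nothing about $y\cdot\nabla a(x)$. Indeed, requiring $(x-y)\cdot\nabla a(x)\le 0$ for all $y$ forces $\nabla a\equiv 0$. The crude bound on the bad term is $\lesssim\|\nabla a\|_{\L^\infty_x}\mathcal{M}[u_0]\|u\|^4_{\L^4_{t,x}}$. This cannot be absorbed by the left-hand side $\|u\|^4_{\L^4_{t,x}}$ unless $\|\nabla a\|_{\L^\infty_x}\mathcal{M}[u_0]$ is small, which fails for large data. So the input to Step 2 is missing.

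\textbf{The one-particle estimate is not enough.} What you do have is the origin-centred bound $\iint a|u|^4/|x|\lesssim C(\|u_0\|_{\H^1_x})$. This is not a scattering-size quantity. It degenerates at spatial infinity and wherever $a$ is small. It also cannot be recentred at other points, since repulsivity holds only about the origin. Passing from it to $\|u\|_{\L^5_{t,x}}<\infty$ is exactly the content of the cited theorem.

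\textbf{The fallback is not supplied.} Your fallback only names concentration-compactness, and that route is not routine here.
\begin{itemize}
\item There is no translation or Galilean invariance, so there is no conserved momentum to control the spatial centre of a critical element.
\item Profiles escaping to infinity evolve under subsequential limits of $a(\cdot+x_n)$. These need not be constant, nor repulsive about any point, so the rigidity step cannot simply quote the origin-centred Morawetz bound.
\end{itemize}
As it stands, your proposal proves scattering only conditionally on a global spacetime bound that it does not establish.
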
 
	
	\begin{rmk}\label{Strichartz_norm_finite}
		It is well-known that, by a standard bootstrap argument and estimate
		\begin{equation}\label{L5_est}
			\|\e^{\i t\Delta} \varphi\|_{\L^5_{t,x}} \lesssim \|\varphi\|_{\dot{\H}^{1/2}_x} \lesssim \|\varphi\|_{\H^1_x},
		\end{equation}
		one can obtain
		$$\|\e^{\i (t-T)\Delta}u(T)\|_{\L^5_{t,x}([T,\infty))} \to 0,\quad \text{as $T\to \infty$}.$$
		The analogous statement holds for the negative-time tail as $T\to -\infty$, and these two facts imply that
		$$\|u\|_{\L^5_{t,x}} + \|u\|_{\S^1} < \infty.$$
		It is also well-known that Strichartz estimates and $\L^5_{t,x}$ bound for $u(t)$ imply spacetime bounds for all Schr\"odinger admissible pairs
		\begin{equation}\label{strichartz-norm-finite}
			\|u\|_{\S^1} \leq C_{a,u} = C\left(\|a\|_{\W^{1,\infty}_x},\|u\|_{\L^5_{t,x}},\|u_0\|_{\H^1_x}\right).
		\end{equation}
		Throughout the rest of the article, we use $C_{a,u}$ to denote constants depending on $\|a\|_{\W^{1,\infty}_x}$, $\|u\|_{\L^5_{t,x}}$ and $\|u_0\|_{\H^1_x}$, which may change line by line.
	\end{rmk}
	The following theorem is adapted from Proposition 2.3 in \cite{H-R-2008}.
	\begin{thm}[Perturbation theorem]\label{perturbation-thm}
		For any $u_0\in \H^1(\mathbb{R}^3)$, let $u(t)$ be the corresponding global scattering solution to \eqref{cubic-nls-1}. If $v_0\in B_{\varepsilon}(u_0)$ and we set $w^{\varepsilon}(t) = v^{\varepsilon}(t) - u(t)$, where $v^{\varepsilon}(t)$ is the global scattering solution to \eqref{cubic-nls-1} corresponding to $v_0$, then
		\begin{equation}\label{w-5-norm}
			\|w^{\varepsilon}\|_{\L^5_{t,x}(\mathbb{R}\times \mathbb{R}^3)}\leq C_{a,u}\varepsilon
		\end{equation}
		and
		\begin{equation}\label{w-(p,q)-norm}
			\|\langle \nabla \rangle w^{\varepsilon}\|_{\L^p_t\L^q_x(\mathbb{R}\times \mathbb{R}^3)}\leq C_{a,u}\varepsilon
		\end{equation}
		for all Schr\"odinger admissible pairs $(p,q)$.
	\end{thm}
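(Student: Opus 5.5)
This is the standard long-time perturbation argument of \cite{H-R-2008}. It splits the time line into finitely many intervals on which $u$ is small and runs a small-data argument for the difference equation on each interval. The difference $w=w^{\varepsilon}$ solves
$$\i\partial_t w+\Delta w = a\big(|u+w|^2(u+w)-|u|^2u\big),\qquad w(0)=v_0-u_0,\quad \|w(0)\|_{\H^1_x}<\varepsilon .$$
The nonlinearity on the right is a sum of terms $a\,\mathcal{O}(u^2w)+a\,\mathcal{O}(uw^2)+a\,\mathcal{O}(w^3)$.

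\textbf{Step 1 (partition).} By Remark \ref{Strichartz_norm_finite}, $\|u\|_{\L^5_{t,x}}+\|u\|_{\S^1}\le C_{a,u}<\infty$. Fix a small $\eta>0$, chosen below depending only on $\|a\|_{\W^{1,\infty}_x}$ and the Strichartz constants. Partition $\mathbb{R}$ into $J=J(\eta,C_{a,u})$ intervals $I_j=[t_j,t_{j+1}]$, ordered outward from $t=0$ in both directions, with
$$\|u\|_{\L^5_{t,x}(I_j)}+\|u\|_{\L^{10}_t\L^{5}_x(I_j)}+\|\langle\nabla\rangle u\|_{\L^{10}_t\L^{30/13}_x(I_j)}\le \eta .$$
This is possible since all of these norms are finite over $\mathbb{R}$.

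\textbf{Step 2 (estimate on one interval).} On $I_j$, write $X_j=\|\langle\nabla\rangle w\|_{\S^1(I_j)}$, taking the supremum over admissible pairs as in \eqref{Strichartz-norm}. Apply Strichartz (Proposition \ref{strichartz-est}) to the Duhamel formula started at $t_j$. Estimate the nonlinearity in $\langle\nabla\rangle^{-1}\L^2_t\L^{6/5}_x$ exactly as in \eqref{B-to-B}, using H\"older, the Leibniz rule with $\|a\|_{\W^{1,\infty}_x}$, and the Sobolev embedding \eqref{Sobolev-embedding-1}. The same exponents give the bound globally in time on $I_j$, with no factor $T^{1/5}$, because $(10,30/13)$ is admissible and $\L^{10}_t\L^5_x$ is controlled by it. This yields
$$X_j\le C\|w(t_j)\|_{\H^1_x}+C\|a\|_{\W^{1,\infty}_x}\big(\eta^2X_j+\eta X_j^2+X_j^3\big).$$

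\textbf{Step 3 (bootstrap and iteration).} Fix $\eta$ so small that $C\|a\|_{\W^{1,\infty}_x}\eta^2\le \tfrac14$. A continuity argument in the right endpoint of the interval then gives $X_j\le 4C\|w(t_j)\|_{\H^1_x}$, provided $\|w(t_j)\|_{\H^1_x}\le\delta$ for a suitable $\delta=\delta(\|a\|_{\W^{1,\infty}_x})$. The key observation is that the quadratic and cubic terms are absorbed once $X_j$ is small. Since $\|w(t_{j+1})\|_{\H^1_x}\le X_j$, induction gives $\|w(t_j)\|_{\H^1_x}\le (4C)^j\varepsilon$. This stays below $\delta$ for all $j\le J$ as long as $\varepsilon\le\varepsilon_0(a,u)$.

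\textbf{Step 4 (conclusion).} Summing over the $J$ intervals gives \eqref{w-(p,q)-norm} with constant $J(4C)^J$. Estimate \eqref{w-5-norm} follows from \eqref{w-(p,q)-norm} by the Sobolev embedding $\dot{\H}^{1/2,30/11}\hookrightarrow \L^5$ at the admissible pair $(5,30/11)$. For $\varepsilon>\varepsilon_0$, the bound is trivial: both $\|v\|_{\S^1}$ and $\|u\|_{\S^1}$ are bounded in terms of their data by \eqref{strichartz-norm-finite} and the energy bound \eqref{u-H^1-est}. Hence $\|w\|\lesssim C\le (C/\varepsilon_0)\varepsilon$, with the constant depending only on $a$ and $u$ once $\varepsilon$ is bounded.

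\textbf{Main obstacle.} The delicate point is this last large-$\varepsilon$ case together with the uniformity claim in $C_{a,u}$. Strictly speaking, the $\L^5$ bound of $v$ is not quantified by $\|v_0\|_{\H^1_x}$ in Theorem \ref{large_data_sacttering}. So I would only claim the estimate for $\varepsilon\le\varepsilon_0(a,u)$, which is the regime used later in the Born expansion. I would also check carefully that the number of intervals $J$ depends only on $\|u\|_{\L^5_{t,x}}$ and $\|u\|_{\S^1}$.
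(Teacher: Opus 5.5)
Your architecture (finitely many intervals on which $u$ is small, Duhamel from the endpoint nearer $t=0$, bootstrap, outward iteration, summation) is the paper's. Iterating on $\|w(t_j)\|_{\H^1_x}$ instead of the paper's $\gamma_j=\|\langle\nabla\rangle \e^{\i(t-a_j)\Delta}w(a_j)\|_{\L^5_t\L^{30/11}_x}$ is a harmless variation. Your restriction to $\varepsilon\le\varepsilon_0(a,u)$ is also exactly what the paper proves, since its iteration needs $10^{J+1}\varepsilon\le C_0$.

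The genuine gap is in Step 2. You cannot run \eqref{B-to-B} ``globally in time with no factor $T^{1/5}$''. In the bound $\|a u^2w\|_{\L^2_t\L^{6/5}_x}\lesssim\|u\|^2_{\L^{10}_t\L^5_x}\|w\|_{\L^{10}_t\L^{30/13}_x}$ the spatial exponents balance ($\tfrac15+\tfrac15+\tfrac{13}{30}=\tfrac56$), but the temporal ones do not ($\tfrac1{10}+\tfrac1{10}+\tfrac1{10}=\tfrac3{10}\neq\tfrac12$). The missing $\tfrac15$ is precisely H\"older in time on an interval of length $T$. These norms are not scaling-critical for the cubic equation, and the admissibility of $(10,30/13)$ does not change this. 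Your two outermost intervals are unbounded and the others have uncontrolled length. So the inequality $X_j\le C\|w(t_j)\|_{\H^1_x}+C\|a\|_{\W^{1,\infty}_x}(\eta^2X_j+\eta X_j^2+X_j^3)$ is not justified. Reinstating the factor $|I_j|^{1/5}$ would force intervals of bounded length, hence infinitely many of them, which destroys the $(4C)^J$ bound.

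The repair is to use scale-invariant exponents, as the paper does. Estimate $\langle\nabla\rangle$ of the nonlinearity in $\L^{5/3}_t\L^{30/23}_x$, the dual of the admissible pair $(5/2,30/7)$. Place two factors in $\L^5_{t,x}$ and the differentiated factor in $\L^5_t\L^{30/11}_x$; now $3\cdot\tfrac15=\tfrac35$ in time and $\tfrac15+\tfrac15+\tfrac{11}{30}=\tfrac{23}{30}$ in space. Use $\|f\|_{\L^5_{t,x}}\lesssim\|\langle\nabla\rangle f\|_{\L^5_t\L^{30/11}_x}$ to pass between the two. Choose the partition so that $\|\langle\nabla\rangle u\|_{\L^5_t\L^{30/11}_x(I_j)}\le\eta$. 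This controls $\|u\|_{\L^5_{t,x}(I_j)}$ and also the terms in which the derivative lands on $u$, giving the $\eta^2X_j+\eta X_j^2+X_j^3$ structure. With that substitution, your Steps 3 and 4 go through as written.
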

	\begin{proof}
		$w^{\varepsilon}(t)$ solves the equation
		$$\i \partial_t w^{\varepsilon} + \Delta w^{\varepsilon} = a|v^{\varepsilon}|^2v^{\varepsilon} - a|u|^2u.$$
		
		Introduce $\eta>0$. Split $[0,\infty)$ into $J = J(\smash{\|a\|_{\W^{1,\infty}_x},\|u\|_{\L^5_{t,x}},\|u_0\|_{\H^1_x}},\eta)$ many intervals such that on each interval $\I_j$, we have
		\begin{equation}\label{(5,30/11,1)-I_j}
			\|\langle \nabla \rangle u\|_{\L^5_{\I_j}\L^{30/11}_x} < \eta.
		\end{equation}
		Let $\I_j = [a_j,a_{j+1})$. On each $\I_j$, $w^{\varepsilon}(t)$ can be represented by
		\begin{equation}\label{w-Duhamel}
			w^{\varepsilon}(t) = \e^{\i(t-a_j)\Delta}w^{\varepsilon}(a_j) - \i \int_{a_j}^{t}\e^{\i(t-s)\Delta}[a|v^{\varepsilon}|^2v^{\varepsilon} - a|u|^2u]\ \mathrm{d}s.
		\end{equation}
		By fractional Leibniz rules,
		$$\begin{aligned}
			\|\langle \nabla \rangle [a|u+w^{\varepsilon}|^2&(u+w^{\varepsilon}) - a|u|^2u]\|_{\L^{5/3}_t\L^{30/23}_x}\lesssim \|a\|_{\W^{1,\infty}_x}\left[\|u\|^2_{\L^5_{t,x}}+\|w^{\varepsilon}\|^2_{\L^5_{t,x}}\right]\|\langle \nabla \rangle w^{\varepsilon}\|_{\L^5_{t}\L^{30/11}_x}\\
			&+\|a\|_{\W^{1,\infty}_x}\|w^{\varepsilon}\|_{\L^5_{t,x}}\left[\|u\|_{\L^5_{t,x}}+\|w^{\varepsilon}\|_{\L^5_{t,x}}\right]\left[\|\langle \nabla \rangle u\|_{\L^5_{t}\L^{30/11}_x} + \|\langle \nabla \rangle w^{\varepsilon}\|_{\L^5_{t}\L^{30/11}_x}\right]
		\end{aligned}$$
		Therefore
		$$\|\langle \nabla \rangle w^{\varepsilon}\|_{\L^5_{\I_j}\L^{30/11}_x}\leq \gamma_j + C\|a\|_{\W^{1,\infty}_x}\left(\eta^2 \|\langle \nabla \rangle w^{\varepsilon}\|_{\L^5_{\I_j}\L^{30/11}_x} + \eta\|\langle \nabla \rangle w^{\varepsilon}\|^2_{\L^5_{\I_j}\L^{30/11}_x} + \|\langle \nabla \rangle w^{\varepsilon}\|^3_{\L^5_{\I_j}\L^{30/11}_x}\right),$$
		where
		$$\gamma_j = \|\langle \nabla \rangle \e^{\i(t-a_j)\Delta}w^{\varepsilon}(a_j)\|_{\L^5_{t}\L^{30/11}_x}.$$
		If $C\eta^2\|a\|_{\W^{1,\infty}_x}\leq\tfrac{1}{3}$, we get
		$$\|\langle \nabla \rangle w^{\varepsilon}\|_{\L^5_{\I_j}\L^{30/11}_x}\leq \tfrac{3}{2}\gamma_j + \tilde{C}\|a\|_{\W^{1,\infty}_x}\left(\|\langle \nabla \rangle w^{\varepsilon}\|^2_{\L^5_{\I_j}\L^{30/11}_x} + \|\langle \nabla \rangle w^{\varepsilon}\|^3_{\L^5_{\I_j}\L^{30/11}_x}\right).$$
		By a bootstrap argument, there exists $C_0 = C_0(\tilde{C},\|a\|_{\W^{1,\infty}_x})$ such that, if $\gamma_j\leq C_0$, we have
		\begin{enumerate}[\upshape(1)]
			\item $\|\langle \nabla \rangle w^{\varepsilon}\|_{\L^5_{\I_j}\L^{30/11}_x}\leq 4\gamma_j$;
			\item $\tilde{C}\|a\|_{\W^{1,\infty}_x}\left(\|\langle \nabla \rangle w^{\varepsilon}\|^2_{\L^5_{\I_j}\L^{30/11}_x} + \|\langle \nabla \rangle w^{\varepsilon}\|^3_{\L^5_{\I_j}\L^{30/11}_x}\right)\leq 3\gamma_j$.
		\end{enumerate}
		Hence $\|\langle \nabla \rangle w^{\varepsilon}\|_{\L^5_{\I_j}\L^{30/11}_x}\leq 4\|\langle \nabla \rangle \e^{\i(t-a_j)\Delta}w^{\varepsilon}(a_j)\|_{\L^5_t\L^{30/11}_x}$.
		
		To continue the iteration, put $t=a_{j+1}$ in \eqref{w-Duhamel} and apply $\e^{\i(t-a_{j+1})\Delta}$ to obtain:
		$$\e^{\i(t-a_{j+1})\Delta}w^{\varepsilon}(a_{j+1}) = \e^{\i(t-a_j)\Delta}w^{\varepsilon}(a_j) - \i\int_{a_j}^{a_{j+1}}\e^{\i(t-s)\Delta}[a|u+w^{\varepsilon}|^2(u+w^{\varepsilon}) - a|u|^2u]\ \mathrm{d}s.$$
		The same argument yields
		$$\begin{aligned}
			\|\langle \nabla \rangle\e^{\i(t-a_{j+1})\Delta}w^{\varepsilon}(a_{j+1})\|_{\L^5_t\L^{30/11}_x}\leq &\|\langle \nabla \rangle \e^{\i(t-a_j)\Delta}w^{\varepsilon}(a_j)\|_{\L^5_t\L^{30/11}_x}+ C\eta^2\|a\|_{\W^{1,\infty}_x}\smash{\|\langle \nabla \rangle w^{\varepsilon}\|_{\L^5_{\I_j}\L^{30/11}_x}}\\
			&+ \tilde{C}\|a\|_{\W^{1,\infty}_x}\smash{\left(\|\langle\nabla\rangle w^{\varepsilon}\|^2_{\L^5_{\I_j}\L^{30/11}_x} + \|\langle\nabla\rangle w^{\varepsilon}\|^3_{\L^5_{\I_j}\L^{30/11}_x}\right)}.
		\end{aligned}$$
		Taking $\eta$ small enough, we see that $\gamma_{j+1}\leq 10\gamma_j$ provided $\gamma_j\leq C_0$. By assumption, we have $\gamma_0 \leq \varepsilon$. Iterating, we obtain $\gamma_j\leq 10^j \varepsilon$ if $\gamma_j\leq C_0$. If $\varepsilon = \varepsilon(\smash{\|a\|_{\W^{1,\infty}_x},\|u\|_{\L^5_{t,x}},\|u_0\|_{\H^1_x}})$ small enough such that $10^{J+1}\varepsilon\leq C_0$, then this always holds. We obtain
		\begin{equation}\label{w-5-30/11-norm}
			\|\langle \nabla \rangle w^{\varepsilon}\|_{\L^5_t\L^{30/11}_x([0,\infty))}\leq 4(J+1)10^{J+1}\varepsilon.
		\end{equation}
		Hence by Sobolev embedding, we have
		$$\|w^{\varepsilon}\|_{\L^5_{t,x}([0,\infty))}\leq C_{a,u}\varepsilon.$$
		The bounds can similarly be obtained on $(-\infty,0]$. Then by Duhamel formula and Strichartz estimates, for any Schr\"odinger pair $(p,q)$,
		$$\begin{aligned}
			\|\langle \nabla \rangle w^{\varepsilon}\|_{\L^p_t\L^q_x}\lesssim &\|v^{\varepsilon}(0) - u_0\|_{\H^1_x} + \|a\|_{\W^{1,\infty}_x}\left[\|u\|^2_{\L^5_{t,x}}+\|w^{\varepsilon}\|^2_{\L^5_{t,x}}\right]\|\langle \nabla \rangle w^{\varepsilon}\|_{\L^5_{t}\L^{30/11}_x}\\
			&+\|a\|_{\W^{1,\infty}_x}\|w^{\varepsilon}\|_{\L^5_{t,x}}\left[\|u\|_{\L^5_{t,x}}+\|w^{\varepsilon}\|_{\L^5_{t,x}}\right]\left[\|\langle \nabla \rangle u\|_{\L^5_{t}\L^{30/11}_x} + \|\langle \nabla \rangle w^{\varepsilon}\|_{\L^5_{t}\L^{30/11}_x}\right],
		\end{aligned}$$
		which yields \eqref{w-(p,q)-norm}.
	\end{proof}
	\begin{rmk}\label{rmk-perturbation-thm}
		Theorem \ref{perturbation-thm} implies that if $v_0$ is in a neighborhood of $u_0$, then $v_{-}$ is in a neighborhood of $u_{-}$. Precisely, if $v_0\in B_{\varepsilon}(u_0)$, then $v_{-}\in B_{C\varepsilon}(u_{-})$ where $C=C_{a,u}>0$ is a constant. By the triangle inequality,
		$$\|v_{-} - u_{-}\|_{\H^1_x}\leq \|\e^{-\i t\Delta}v^{\varepsilon}(t) - v_{-}\|_{\H^1_x} + \|\e^{-\i t\Delta}v^{\varepsilon}(t) - \e^{-\i t\Delta}u(t)\|_{\H^1_x} + \|\e^{-\i t\Delta}u(t) - u_{-}\|_{\H^1_x}.$$
		Taking $t$ sufficiently close to $-\infty$ and by \eqref{w-(p,q)-norm}, we have
		\begin{equation}\label{dist-v_{-}-u_{-}}
			\|v_{-} - u_{-}\|_{\H^1_x}\leq C_{a,u}\varepsilon.
		\end{equation}
		Conversely, by Strichartz estimates and \eqref{u_{-}-1}, the same argument in the proof of Theorem \ref{perturbation-thm} yields
		\begin{equation}\label{dist-v0-u0}
			\|v_0 - u_0\|_{\H^1_x} \leq C_{a,u}\|v_{-} - u_{-}\|_{\H^1_x}.
		\end{equation}
		Therefore if $v_{-}\in B_{\varepsilon}(u_{-})$, then $v_0\in B_{C\varepsilon}(u_0)$ where $C=C_{a,u}>0$.
	\end{rmk}
	
	\subsection{Asymptotic state and scattering map}
	By Duhamel formula \eqref{Duhamel}, forward asymptotic state is given by the implicit formula
	\begin{equation}\label{u_{+}}
		u_{+} = u_0 - \i\int_{0}^{\infty} \e^{-\i s\Delta}(a|u(s)|^2u(s))\ \mathrm{d}s.
	\end{equation}
	Substituting $u_0$ in \eqref{Duhamel}, we obtain
	\begin{equation}\label{Duhamel-u_{+}}
		u(t) = \e^{\i t\Delta}u_{+} + \i\int_{t}^{\infty}\e^{\i(t-s)\Delta}(a|u(s)|^2u(s))\ \mathrm{d}s.
	\end{equation}
	Similarly, backward asymptotic state can be expressed by
	\begin{equation}\label{u_{-}-1}
		u_{-} = u_0 + \i\int_{-\infty}^{0}\e^{-\i s\Delta}(a|u(s)|^2u(s))\ \mathrm{d}s
	\end{equation}
	and
	\begin{equation}\label{u_{-}-2}
		u_{-} = u_{+} + \i\int_{\mathbb{R}}\e^{-\i s\Delta}(a|u(s)|^2u(s))\ \mathrm{d}s.
	\end{equation}
	$u(t)$ can also be represented by
	\begin{equation}\label{Duhamel-u_{-}}
		u(t) = \e^{\i t\Delta}u_{-} - \i\int_{-\infty}^{t}\e^{\i(t-s)\Delta}(a|u(s)|^2u(s))\ \mathrm{d}s.
	\end{equation}
	The formulas above hold in $\H^1_x$ sense.
	\begin{rmk}\label{u_{pm}_determine-u}
		It is remarkable that $u(t)$ is uniquely determined by either $u_{-}$ or $u_{+}$ in the following sense: Assume $u_1(t)$, $u_2(t)$ are solutions to \eqref{cubic-nls-1} and
		$$\lim_{t\to -\infty}\|\e^{-\i t\Delta}u_1(t) - u_{-}\|_{\H^1_x} = \lim_{t\to-\infty}\|\e^{-\i t\Delta}u_2(t) - u_{-}\|_{\H^1_x} = 0$$
		for some $u_{-}\in \H^1_x$. By \eqref{Duhamel-u_{-}} and Strichartz estimates,
		\begin{equation}\label{u_{-}-unique-est}
			\begin{aligned}
				\|u_1(t) - u_2(t)\|_{\L^5_t\L^{30/11}_x((-\infty,T])} \leq C\|a\|_{\L^{\infty}_x}&\left[\|u_1\|^2_{\L^5_{t,x}((-\infty,T])} + \|u_2\|^2_{\L^5_{t,x}((-\infty,T])}\right]\\
				&\cdot\|u_1(t) - u_2(t)\|_{\L^5_t\L^{30/11}_x((-\infty,T])}.
			\end{aligned}
		\end{equation}
		Taking $T$ sufficiently close to $-\infty$ such that $\smash{\|u_1\|_{\L^5_{t,x}((-\infty,T])}},\smash{\|u_2\|_{\L^5_{t,x}((-\infty,T])}}\leq 1/2\sqrt{C\smash{\|a\|_{\L^{\infty}_x}}}$, a bootstrap argument yields $u_1(t) = u_2(t)$ for $t$ sufficiently small. Then $u_1(t) = u_2(t)$ for $t\in \mathbb{R}$ follows from uniqueness of the solution.
	\end{rmk}
	
	\begin{mydef}[Scattering map]\label{scattering_map}
		The \textit{scattering map} $S: u_{-}\mapsto u_{+}$ is defined by
		\begin{equation}\label{scattering_map_eq}
			Su_{-} := u_{+} = u_{-} - \i\int_{\mathbb{R}} \e^{-\i s\Delta}(a|u(s)|^2u(s))\ \mathrm{d}s.
		\end{equation}
	\end{mydef}
	
	\subsection{Expansion of perturbation terms}
	Now we set $v_{-} = u_{-} + \varepsilon\varphi\in B_{\varepsilon_0}(u_{-})$ for some fixed $\varphi$ with $\|\varphi\|_{\H^1_x}\leq 1$ and $0\leq \varepsilon \leq \varepsilon_0$ small enough. Set $w^{\varepsilon}(t) = v^{\varepsilon}(t) - u(t)$. $w^{\varepsilon}(t)$ solves
	\begin{equation}\label{w_varepsilon_solves}
		\begin{aligned}
			(\i \partial_t + \Delta)w^{\varepsilon} &= a|v^{\varepsilon}|^2v^{\varepsilon} - a|u|^2u = a|u+w^{\varepsilon}|^2(u+w^{\varepsilon}) - a|u|^2u\\
			&= 2a|u|^2w^{\varepsilon} + a u^2\overline{w^{\varepsilon}} + 2a u|w^{\varepsilon}|^2 + a\bar{u}(w^{\varepsilon})^2 + a|w^{\varepsilon}|^2w^{\varepsilon}.
		\end{aligned}
	\end{equation}
	Since
	$$\begin{aligned}
		\|w_{\varepsilon}(t) - \e^{\i t\Delta}(\varepsilon\varphi)\|_{\H^1_x} &= \|v^{\varepsilon}(t) - u(t) - \e^{\i t\Delta}[v_{-} - u_{-}]\|_{\H^1_x}\\
		&\leq \|v^{\varepsilon}(t) - \e^{\i t\Delta}v_{-}\|_{\H^1_x} + \|u(t) - \e^{\i t\Delta}u_{-}\|_{\H^1_x},
	\end{aligned}$$
	it turns out that
	\begin{equation}\label{w_varepsilon-scatter-eq}
		\lim_{t \to -\infty}\|w_{\varepsilon}(t) - \e^{\i t\Delta}(\varepsilon \varphi)\|_{\H^1_x} = 0.
	\end{equation}
	We next show that $w^{\varepsilon}(t)$ can be expanded as
	$$w^{\varepsilon}(t,x) = \varepsilon w_{1}(t,x) + \varepsilon^2 w_{2}(t,x) + \varepsilon^3 w_{3}(t,x) + o(\varepsilon^3).$$
	More precisely, we have the following propositions:
	\begin{prop}\label{w_1}
		\begin{equation}\label{def-w_1}
			w_1(t,x) := \lim_{\varepsilon\to 0}\frac{w^{\varepsilon}(t,x)}{\varepsilon}
		\end{equation}
		is a well-defined $\S^1$ function solving
		\begin{equation}\label{w_1-eq}
			(\i \partial_t + \Delta)w_{1} = 2a|u|^2w_{1} + au^2\overline{w_1}
		\end{equation}
		and
		\begin{equation}\label{w_1-S1-bound}
			\|w_1\|_{\S^1}\leq C_{a,u}.
		\end{equation}
	\end{prop}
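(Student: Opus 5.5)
Rather than take the limit in \eqref{def-w_1} directly, I will first construct a candidate $w_1$ as the solution of the linearized problem and then show that $w^{\varepsilon}/\varepsilon$ converges to it.

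\textbf{Step 1: the linearized equation.} Let $w_1$ be the solution of \eqref{w_1-eq} with data at $-\infty$:
$$w_1(t) = \e^{\i t\Delta}\varphi - \i\int_{-\infty}^{t}\e^{\i(t-s)\Delta}\left[2a|u|^2w_1 + au^2\overline{w_1}\right](s)\ \mathrm{d}s.$$
I will solve this exactly as in the proof of Theorem \ref{perturbation-thm}. By Remark \ref{Strichartz_norm_finite} we have $\|u\|_{\L^5_{t,x}} + \|u\|_{\S^1}\leq C_{a,u}$. So split $\mathbb{R}$ into $J=J(C_{a,u},\eta)$ intervals $\I_j$ on which $\|\langle\nabla\rangle u\|_{\L^5_{\I_j}\L^{30/11}_x}<\eta$. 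On each $\I_j$ the map defined by the right-hand side is a contraction in the norm $\|\langle\nabla\rangle\cdot\|_{\L^5_{\I_j}\L^{30/11}_x}$. This uses the fractional Leibniz estimate already displayed in the proof of Theorem \ref{perturbation-thm}, keeping only the terms linear in the perturbation: the nonlinear term is bounded by $C\|a\|_{\W^{1,\infty}_x}\eta^2$ times the norm. Because the equation is linear, there is no smallness requirement on the data. Iterating over the intervals, as in that proof, gives
$$\|\langle\nabla\rangle w_1\|_{\L^5_t\L^{30/11}_x}\leq C^J\|\varphi\|_{\H^1_x}\leq C_{a,u}.$$
One more application of Strichartz estimates over all admissible pairs then yields \eqref{w_1-S1-bound}.

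\textbf{Step 2: the remainder.} Write $w^{\varepsilon} = \varepsilon w_1 + r^{\varepsilon}$. By \eqref{w_varepsilon-scatter-eq} and \eqref{Duhamel-u_{-}}, $w^{\varepsilon}$ satisfies the Duhamel formula from $-\infty$ with free data $\varepsilon\varphi$. Subtracting the formula for $\varepsilon w_1$, the free parts cancel and
$$r^{\varepsilon}(t) = -\i\int_{-\infty}^t \e^{\i(t-s)\Delta}\left[2a|u|^2r^{\varepsilon} + au^2\overline{r^{\varepsilon}} + N(u,w^{\varepsilon})\right]\mathrm{d}s,$$
where $N = 2au|w^{\varepsilon}|^2 + a\bar u (w^{\varepsilon})^2 + a|w^{\varepsilon}|^2w^{\varepsilon}$ collects the quadratic and cubic terms of \eqref{w_varepsilon_solves}. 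Remark \ref{rmk-perturbation-thm} converts $v_-\in B_{\varepsilon}(u_-)$ into $v_0\in B_{C\varepsilon}(u_0)$, so Theorem \ref{perturbation-thm} gives $\|w^{\varepsilon}\|_{\L^5_{t,x}} + \|\langle\nabla\rangle w^{\varepsilon}\|_{\L^5_t\L^{30/11}_x}\leq C_{a,u}\varepsilon$. Hence, by the same Leibniz and H\"older estimates,
$$\|\langle\nabla\rangle N\|_{\L^{5/3}_t\L^{30/23}_x}\leq C_{a,u}\varepsilon^2 .$$
Running the same interval-by-interval linear argument as in Step 1, now with zero free data and forcing of size $\varepsilon^2$, gives $\|r^{\varepsilon}\|_{\S^1}\leq C_{a,u}\varepsilon^2$. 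Therefore $w^{\varepsilon}/\varepsilon\to w_1$ in $\S^1$, at rate $O(\varepsilon)$. This is the sense in which \eqref{def-w_1} holds, and it makes the limit well defined.

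\textbf{Main obstacle.} The delicate point is the linear estimate in large-data form. Smallness comes only from dividing time into intervals adapted to $u$, not from the size of $\varphi$. The argument also has to run from $t=-\infty$ rather than from $t=0$, so the first interval must be $(-\infty,a_1)$, with the data prescribed as a scattering state. Handling this requires the analogue of the backward-tail bootstrap used in Remark \ref{u_{pm}_determine-u}: on $(-\infty,T]$ with $T$ very negative, $\|u\|_{\L^5_{t,x}((-\infty,T])}$ is small, which gives both existence and uniqueness there. Once that first interval is settled, the forward iteration is routine.
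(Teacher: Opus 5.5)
Your argument is correct, but it is organized differently from the paper's.

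\textbf{The paper's route.} The paper never solves the linearized equation directly. It writes the Duhamel formula from $-\infty$ for the difference $w^{\varepsilon_n}/\varepsilon_n - w^{\varepsilon_m}/\varepsilon_m$, where the free parts cancel. It then runs the same interval-by-interval bootstrap to get $\|w^{\varepsilon_n}/\varepsilon_n - w^{\varepsilon_m}/\varepsilon_m\|_{\S^1}\le C_{a,u}(\varepsilon_n+\varepsilon_m+\varepsilon_n^2+\varepsilon_m^2)$. It obtains $w_1$ from completeness of $\S^1$, reads off \eqref{w_1-eq} from \eqref{w_varepsilon_solves}, and gets \eqref{w_1-S1-bound} from \eqref{w-(p,q)-norm}.

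\textbf{Your route.} You first construct the candidate as the solution of the linear problem with asymptotic state $\varphi$ at $-\infty$, and then show $w^{\varepsilon}-\varepsilon w_1=O(\varepsilon^2)$ in $\S^1$. The analytic input is the same: intervals where $\|\langle\nabla\rangle u\|_{\L^5_t\L^{30/11}_x}<\eta$, the dual pair $(5/3,30/23)$, and the $O(\varepsilon)$ bound on $w^{\varepsilon}$ from Theorem~\ref{perturbation-thm} via Remark~\ref{rmk-perturbation-thm}.

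\textbf{Trade-offs.} The paper's route avoids building a separate linear theory from $t=-\infty$. In exchange, it must justify passing to the limit in the equation, since the quadratic and cubic terms divided by $\varepsilon$ have to vanish. It must also identify the scattering data of $w_1$ separately, which it does in Proposition~\ref{w-scatter-varphi} by a Fatou-type argument. Your route has several advantages:
\begin{enumerate}[\upshape(1)]
\item it gives \eqref{w_1-eq} by construction;
\item it gives the rate $\|w^{\varepsilon}/\varepsilon-w_1\|_{\S^1}=O(\varepsilon)$ directly;
\item it makes Proposition~\ref{w-scatter-varphi} and the representation in Remark~\ref{w_1-formula} immediate.
\end{enumerate}
Your Step 1 is also exactly the ``well-posedness theory'' the paper later invokes without proof in Lemma~\ref{w^{nl}_uniform_decay}. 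That is, linear well-posedness from $-\infty$ with a bound $C_{a,u}\|\varphi\|_{\H^1_x}$, applied there to the data $\e^{\i T\Delta}\varphi$. So your organization also supplies that ingredient.
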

	\begin{proof}
		Firstly, we prove $\left\{\tfrac{w^{\varepsilon}}{\varepsilon}\right\}$ is a Cauchy sequence in $\S^1$. By Duhamel formula, we have
		$$\begin{aligned}
			\tfrac{w^{\varepsilon_n}}{\varepsilon_n} - \tfrac{w^{\varepsilon_m}}{\varepsilon_m} = -\i \int_{-\infty}^{t} \e^{\i(t-s)\Delta}\Big[ &2a|u|^2\left(\tfrac{w^{\varepsilon_n}}{\varepsilon_n} - \tfrac{w^{\varepsilon_m}}{\varepsilon_m}\right) + a u^2 \left(\tfrac{\overline{w^{\varepsilon_n}}}{\varepsilon_n} - \tfrac{\overline{w^{\varepsilon_m}}}{\varepsilon_m}\right) + 2au\left(\tfrac{|w^{\varepsilon_n}|^2}{\varepsilon_n} - \tfrac{|w^{\varepsilon_m}|^2}{\varepsilon_m}\right)\\
			&+a\bar{u}\left(\tfrac{(w^{\varepsilon_n})^2}{\varepsilon_n} - \tfrac{(w^{\varepsilon_m})^2}{\varepsilon_m}\right) + a\left(\tfrac{|w^{\varepsilon_n}|^2w^{\varepsilon_n}}{\varepsilon_n} - \tfrac{|w^{\varepsilon_m}|^2w^{\varepsilon_m}}{\varepsilon_m}\right)\Big]\ \mathrm{d}s.
		\end{aligned}$$
		Similar to the bootstrap argument in the proof of Theorem \ref{perturbation-thm}, using Strichartz estimates and \eqref{w-(p,q)-norm}, we can obtain
		$$\left\|\tfrac{w^{\varepsilon_n}}{\varepsilon_n} - \tfrac{w^{\varepsilon_m}}{\varepsilon_m}\right\|_{\S^1} \leq C_{a,u}\left(\varepsilon_n + \varepsilon_m + \varepsilon^2_n + \varepsilon^2_m\right).$$
		Thus $\left\{\tfrac{w^{\varepsilon}}{\varepsilon}\right\}$ is a Cauchy sequence in $\S^1$ and \eqref{def-w_1} is well-defined in $\S^1$.
		
		Then \eqref{w_1-eq} follows from \eqref{w_varepsilon_solves} and \eqref{w_1-S1-bound} follows directly from \eqref{w-(p,q)-norm}.
	\end{proof}
	
	One should notice that $\varphi$ determines $w_1(t)$ in the following sense:
	\begin{prop}\label{w-scatter-varphi}
		We have
		\begin{equation}\label{w-scatter-varphi-eq}
			\lim_{t\to -\infty}\|w_1(t) - \e^{\i t\Delta}\varphi\|_{\H^1_x} = 0.
		\end{equation}
	\end{prop}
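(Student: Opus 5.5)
The plan is to write $w_1$ in Duhamel form from $t=-\infty$ and show that the integral term vanishes in $\H^1_x$ as $t\to-\infty$. First I establish the integral equation
\begin{equation*}
	w_1(t) = \e^{\i t\Delta}\varphi - \i\int_{-\infty}^{t}\e^{\i(t-s)\Delta}\left[2a|u|^2w_1 + au^2\overline{w_1}\right](s)\ \mathrm{d}s .
\end{equation*}
To get it, I start from the Duhamel formula for $w^{\varepsilon}$ based at $-\infty$. This formula follows by subtracting \eqref{Duhamel-u_{-}} for $u$ from the analogous formula for $v^{\varepsilon}$, using $v_{-}-u_{-}=\varepsilon\varphi$:
\begin{equation*}
	w^{\varepsilon}(t) = \e^{\i t\Delta}(\varepsilon\varphi) - \i\int_{-\infty}^{t}\e^{\i(t-s)\Delta}\left[a|u+w^{\varepsilon}|^2(u+w^{\varepsilon}) - a|u|^2u\right](s)\ \mathrm{d}s .
\end{equation*}
I then divide by $\varepsilon$ and let $\varepsilon\to0$. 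By Proposition \ref{w_1}, $w^{\varepsilon}/\varepsilon\to w_1$ in $\S^1$.

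For the nonlinear part I use the expansion \eqref{w_varepsilon_solves}. The linear terms $2a|u|^2\frac{w^{\varepsilon}}{\varepsilon}+au^2\frac{\overline{w^{\varepsilon}}}{\varepsilon}$ converge to $2a|u|^2w_1+au^2\overline{w_1}$ in the dual space $\langle\nabla\rangle^{-1}\L^{5/3}_t\L^{30/23}_x$. This follows from the fractional Leibniz estimate already used in the proof of Theorem \ref{perturbation-thm}, since $\|u\|_{\L^5_{t,x}}$ and $\|u\|_{\S^1}$ are finite by Remark \ref{Strichartz_norm_finite}. The quadratic and cubic terms, divided by $\varepsilon$, are bounded in the same norm by $C_{a,u}(\varepsilon+\varepsilon^2)$ thanks to \eqref{w-5-norm} and \eqref{w-(p,q)-norm}, so they vanish in the limit. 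The inhomogeneous Strichartz estimate \eqref{inhomogeneous-Strichartz-estimate}, applied with $\langle\nabla\rangle$, then passes the limit through the Duhamel integral in $\L^\infty_t\H^1_x$. This gives the integral equation for $w_1$ for every $t$.

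Next I bound the tail. From the integral equation,
\begin{equation*}
	\|w_1(t)-\e^{\i t\Delta}\varphi\|_{\H^1_x} \lesssim \left\|\langle\nabla\rangle\left[2a|u|^2w_1+au^2\overline{w_1}\right]\right\|_{\L^{5/3}_t\L^{30/23}_x((-\infty,t])}.
\end{equation*}
Here I use the dual Strichartz estimate \eqref{dual-homogeneous-Strichartz-estimate} restricted to $(-\infty,t]$, together with the unitarity of $\e^{\i t\Delta}$ on $\H^1_x$. By Leibniz and H\"older, as in the proof of Theorem \ref{perturbation-thm}, the right-hand side is bounded by
\begin{equation*}
	C\|a\|_{\W^{1,\infty}_x}\left(\|u\|^2_{\L^5_{t,x}((-\infty,t])}+\|u\|_{\L^5_{t,x}((-\infty,t])}\|\langle\nabla\rangle u\|_{\L^5_t\L^{30/11}_x((-\infty,t])}\right)\|w_1\|_{\S^1}.
\end{equation*}
The factor $\|w_1\|_{\S^1}$ is at most $C_{a,u}$ by \eqref{w_1-S1-bound}. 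The $u$-norms are restricted to $(-\infty,t]$ and are the tails of globally finite norms, so they tend to $0$ as $t\to-\infty$ by dominated convergence. This gives \eqref{w-scatter-varphi-eq}.

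I expect the main obstacle to be justifying the limit $\varepsilon\to0$ in the Duhamel formula. The convergence in Proposition \ref{w_1} is in $\S^1$, and I need this to suffice for the convergence of each nonlinear term in the dual Strichartz norm. I will first check that the $\S^1$ norm controls $\L^5_{t,x}$ via Sobolev embedding \eqref{Sobolev-embedding-1}, so that every term is estimated in norms already controlled by \eqref{w-5-norm} and \eqref{w-(p,q)-norm}. Everything else reduces to the tail-smallness argument above.
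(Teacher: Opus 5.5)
Your argument is correct and is essentially the paper's proof: Duhamel from $t=-\infty$, the dual Strichartz estimate on $(-\infty,t]$ with the same $\L^{5/3}_t\L^{30/23}_x$ Leibniz--H\"older bound, and smallness of the tail norms of $u$ against a uniform bound on the perturbation. The only difference is the order of limits. You first let $\varepsilon\to0$ to obtain the linearized integral equation of Remark \ref{w_1-formula}, then estimate its tail using $\|w_1\|_{\S^1}\leq C_{a,u}$; the paper instead estimates $w^{{\rm nl}}_{\varepsilon}(t)/\varepsilon$ at fixed $\varepsilon$ using $\|w^{\varepsilon}\|/\varepsilon\leq C_{a,u}$ and passes to the limit afterwards. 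Your ordering has the small advantage of justifying that integral formula directly rather than deducing it from this proposition.
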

	\begin{proof}
		By formula \eqref{Duhamel-u_{-}}, we can write
		\begin{equation}\label{w_varepsilon-varphi}
			\begin{aligned}
				w^{\varepsilon}(t) &= \e^{\i t\Delta}(\varepsilon \varphi) - \i\int_{-\infty}^{t}\e^{\i(t-s)\Delta}\left[a|v^{\varepsilon}(s)|^2v^{\varepsilon}(s) - a|u(s)|^2u(s)\right]\ \mathrm{d}s\\
				&= \e^{\i t\Delta}(\varepsilon \varphi) - \i\int_{-\infty}^{t}\e^{\i(t-s)\Delta}\left[a|u+w^{\varepsilon}|^2(s)(u+w^{\varepsilon})(s) - a|u(s)|^2u(s)\right]\ \mathrm{d}s.
			\end{aligned}
		\end{equation}
		Thus $w^{\varepsilon}(t)$ can be written as:
		$$w^{\varepsilon} = \e^{\i t\Delta}(\varepsilon\varphi) + w^{\text{nl}}_{\varepsilon}.$$
		To prove \eqref{w-scatter-varphi-eq}, our aim is to prove
		\begin{equation}\label{w-scatter-varphi-aim}
			\lim_{t\to-\infty}\left\|\lim_{\varepsilon\to 0}\frac{w_{\varepsilon}^{\text{nl}}(t)}{\varepsilon}\right\|_{\H^1_x} = 0.
		\end{equation}
		Estimating as in Theorem \ref{perturbation-thm} and using Strichartz estimates, we obtain
		$$\begin{aligned}
			&\|w_{\varepsilon}^{\text{nl}}(t)\|_{\H^1_x}\lesssim \|a\|_{\W^{1,\infty}_x}\left[\|u\|^2_{\L^5_{(-\infty,t),x}}+\|w^{\varepsilon}\|^2_{\L^5_{(-\infty,t),x}}\right]\|\langle \nabla \rangle w^{\varepsilon}\|_{\L^5_{(-\infty,t)}\L^{30/11}_x}\\
			&+\|a\|_{\W^{1,\infty}_x}\|w^{\varepsilon}\|_{\L^5_{(-\infty,t),x}}\left[\|u\|_{\L^5_{(-\infty,t),x}}+\|w^{\varepsilon}\|_{\L^5_{(-\infty,t),x}}\right]\left[\|\langle \nabla \rangle u\|_{\L^5_{(-\infty,t)}\L^{30/11}_x} + \|\langle \nabla \rangle w^{\varepsilon}\|_{\L^5_{(-\infty,t)}\L^{30/11}_x}\right].
		\end{aligned}$$
		By Fatou's lemma,
		$$\lim_{t\to-\infty}\left\|\lim_{\varepsilon\to 0}\frac{w_{\varepsilon}^{\text{nl}}(t)}{\varepsilon}\right\|_{\H^1_x} \leq \lim_{t \to -\infty}\lim_{\varepsilon\to 0}\frac{\|w^{\text{nl}}_{\varepsilon}(t)\|_{\H^1_x}}{\varepsilon} = 0$$
		where the last equality follows from \eqref{strichartz-norm-finite} and \eqref{w-(p,q)-norm}.
	\end{proof}
	\begin{rmk}\label{w_1-formula}
		By Proposition \ref{w_1} and Proposition \ref{w-scatter-varphi}, we have
		\begin{equation}\label{w-formula-eq}
			w_1(t) = \e^{\i t\Delta}\varphi + w^{\text{nl}}_{1}(t)
		\end{equation}
		where
		\begin{equation}\label{w^{nl}_1}
			w^{\text{nl}}_{1}(t) = -\i \int_{-\infty}^{t}\e^{\i(t-s)\Delta}\left[2a|u|^2(s)w_{1}(s) + au^2(s)\overline{w_1}(s)\right]\ \mathrm{d}s.
		\end{equation}
		By a similar estimate in the proof of Proposition \ref{w-scatter-varphi}, we can obtain
		\begin{equation}\label{w^{nl}-S-norm-decay}
			\lim_{T \to -\infty} \|\langle \nabla \rangle w^{\text{nl}}_1(t)\|_{\L^p_t\L^q_x((-\infty,T))} = 0
		\end{equation}
		for all Schr\"odinger admissible pairs $(p,q)$.
	\end{rmk}
	The following uniform tail decay is critical to the proof in section 4.
	\begin{lem}\label{w^{nl}_uniform_decay}
		Let $T\in \mathbb{R}$, and
		\begin{equation}\label{w^{T}_1_formula}
			w^{T}_1(t) = \e^{\i (t+T)\Delta}\varphi + w^{{\rm nl},T}_1(t).
		\end{equation}
		Denote $\I_T = (-\infty,T)$. It holds
		\begin{equation}\label{w^{nl}_1_uniform_tail_bound}
			\lim_{T\to -\infty}\|w^{{\rm nl},T}_1(t)\|_{\S^1(\I_T)} = 0.
		\end{equation}
	\end{lem}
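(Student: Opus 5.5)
The plan is a direct perturbative absorption argument on the half-line $\I_T=(-\infty,T)$. The key point is that the free part of $w^T_1$ has Strichartz norms bounded uniformly in $T$, while the coefficients $|u|^2$ and $u^2$ become small on $\I_T$ as $T\to-\infty$.

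Here $w^T_1$ is the linearized solution from Proposition \ref{w_1} and Remark \ref{w_1-formula} with $\varphi$ replaced by $\e^{\i T\Delta}\varphi$, which still satisfies $\|\e^{\i T\Delta}\varphi\|_{\H^1_x}\leq 1$. Its nonlinear part is
$$w^{{\rm nl},T}_1(t) = -\i\int_{-\infty}^{t}\e^{\i(t-s)\Delta}\left[2a|u|^2w^T_1 + au^2\overline{w^T_1}\right](s)\ \mathrm{d}s.$$
By Proposition \ref{w_1} we have $\|w^T_1\|_{\S^1}\leq C_{a,u}<\infty$, so every quantity below is finite a priori and absorption is legitimate. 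The free part is controlled by \eqref{homogeneous-Strichartz-estimate} and unitarity of $\e^{\i T\Delta}$ on $\H^1_x$:
$$\|\e^{\i(t+T)\Delta}\varphi\|_{\S^1}\lesssim \|\varphi\|_{\H^1_x}\leq 1,$$
uniformly in $T$.

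First, I introduce the smallness quantity
$$\delta(T):=\|u\|_{\L^5_{t,x}(\I_T)}+\|\langle\nabla\rangle u\|_{\L^5_{\I_T}\L^{30/11}_x}.$$
By Remark \ref{Strichartz_norm_finite}, $u\in\L^5_{t,x}$ and $\|u\|_{\S^1}<\infty$. Since $(5,30/11)$ is admissible, dominated convergence gives $\delta(T)\to 0$ as $T\to-\infty$.

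Second, I estimate the Duhamel term restricted to $\I_T$. This uses the inhomogeneous Strichartz estimate \eqref{inhomogeneous-Strichartz-estimate} with dual pair $\L^{5/3}_t\L^{30/23}_x$, the fractional Leibniz rule with $a\in\W^{1,\infty}_x$, and the Sobolev embedding $\|f\|_{\L^5_x}\lesssim\|\langle\nabla\rangle f\|_{\L^{30/11}_x}$. Exactly as in the proof of Theorem \ref{perturbation-thm}, this gives
$$\|w^{{\rm nl},T}_1\|_{\S^1(\I_T)}\lesssim \|a\|_{\W^{1,\infty}_x}\left(\|u\|^2_{\L^5_{t,x}(\I_T)}+\|u\|_{\L^5_{t,x}(\I_T)}\|\langle\nabla\rangle u\|_{\L^5_{\I_T}\L^{30/11}_x}\right)\|\langle\nabla\rangle w^T_1\|_{\L^5_{\I_T}\L^{30/11}_x}.$$
The right-hand side is at most $C\|a\|_{\W^{1,\infty}_x}\,\delta(T)^2\,\|w^T_1\|_{\S^1(\I_T)}$.

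Third, I insert $\|w^T_1\|_{\S^1(\I_T)}\leq C+\|w^{{\rm nl},T}_1\|_{\S^1(\I_T)}$ from \eqref{w^{T}_1_formula}. This yields
$$X_T\leq C\|a\|_{\W^{1,\infty}_x}\,\delta(T)^2\,(C+X_T),\qquad X_T:=\|w^{{\rm nl},T}_1\|_{\S^1(\I_T)}.$$
For $T$ negative enough that $C\|a\|_{\W^{1,\infty}_x}\delta(T)^2\leq\tfrac12$, the finite quantity $X_T$ can be absorbed. This gives $X_T\lesssim \|a\|_{\W^{1,\infty}_x}\,\delta(T)^2\to0$, which is \eqref{w^{nl}_1_uniform_tail_bound}.

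The main obstacle is uniformity in $T$. The data $\e^{\i T\Delta}\varphi$ move with $T$, so Remark \ref{w_1-formula} cannot be applied for a fixed $\varphi$. The argument avoids this because $\delta(T)$ depends only on $u$, and the free evolution is controlled uniformly by unitarity. The only other care needed is the a priori finiteness of $X_T$, which is what justifies the absorption. If one wants to avoid invoking Proposition \ref{w_1} for the shifted data, one can instead construct $w^T_1$ on $\I_T$ directly by contraction, since the linear map is contractive there once $\delta(T)$ is small.
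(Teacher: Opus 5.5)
Your proposal is correct and follows the paper's argument. You apply Strichartz to the Duhamel term on $\I_T$, the coefficient contributes the small factor $(\|u\|_{\L^5_{\I_T,x}}+\|\langle\nabla\rangle u\|_{\L^5_{\I_T}\L^{30/11}_x})^2\to 0$, and uniformity in $T$ comes from $\|\e^{\i T\Delta}\varphi\|_{\H^1_x}=\|\varphi\|_{\H^1_x}$. The only difference is that the paper quotes the global bound $\|w^T_1\|_{\S^1}\leq C_{a,u}\|\varphi\|_{\H^1_x}$ from well-posedness theory and multiplies it by the small tail factor. Your absorption step needs only a priori finiteness of $\|w^{{\rm nl},T}_1\|_{\S^1(\I_T)}$, so it is slightly more self-contained.
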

	\begin{proof}
		By Strichartz estimates and \eqref{w^{nl}_1}, we obtain
		\begin{equation}\label{w^{T}_1_est}
			\|w^{{\rm nl},T}_1(t)\|_{\S^1(\I_T)} \lesssim \|a\|_{\W^{1,\infty}_x}\left(\|w^{T}_1\|_{\L^5_{\I_T,x}} + \|\langle \nabla \rangle w^{T}_1\|_{\L^5_{\I_T}\L^{30/11}_x}\right)\left(\|u\|_{\L^5_{\I_T,x}} + \|\langle \nabla \rangle u\|_{\L^5_{\I_T}\L^{30/11}_x}\right)^2
		\end{equation}
		By well-posedness theory, we know
		$$\|w^{T}_1\|_{\S^1} \leq C_{a,u}\|\e^{\i T\Delta}\varphi\|_{\H^1_x} = C_{a,u}\|\varphi\|_{\H^1_x}.$$
		Letting $T\to -\infty$, \eqref{w^{T}_1_est} yields \eqref{w^{nl}_1_uniform_tail_bound} by the integrability of $u(t)$ in time.
	\end{proof}
	
	\begin{prop}\label{w_2}
		\begin{equation}\label{def-w_2}
			w_2(t,x) := \lim_{\varepsilon\to 0}\frac{w^{\varepsilon} - \varepsilon w_1}{\varepsilon^2}
		\end{equation}
		is a well-defined $\S^1$ function solving
		\begin{equation}\label{w_2-eq}
			(\i \partial_t + \Delta)w_{2} = 2a|u|^2w_{2} + au^2\overline{w_2} + 2au|w_{1}|^2 + a\bar{u}(w_1)^2
		\end{equation}
		with
		\begin{equation}\label{w_2-S1-bound}
			\|w_2\|_{\S^1}\leq C_{a,u},
		\end{equation}
		and
		\begin{equation}\label{w_2_scatter}
			\lim_{t\to -\infty}\|w_2\|_{\S^1((-\infty,t))} = 0.
		\end{equation}
	\end{prop}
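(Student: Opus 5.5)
We follow the pattern of Proposition \ref{w_1}: show that $r^{\varepsilon}:=\frac{w^{\varepsilon}-\varepsilon w_1}{\varepsilon^2}$ is Cauchy in $\S^1$ as $\varepsilon\to0$. First we write its equation. Subtracting $\varepsilon$ times \eqref{w_1-eq} from \eqref{w_varepsilon_solves} and dividing by $\varepsilon^2$ gives
\begin{equation*}
(\i\partial_t+\Delta)r^{\varepsilon} = 2a|u|^2 r^{\varepsilon} + au^2\overline{r^{\varepsilon}} + \tfrac{1}{\varepsilon^2}\left(2au|w^{\varepsilon}|^2 + a\bar u (w^{\varepsilon})^2\right) + \tfrac{1}{\varepsilon^2}a|w^{\varepsilon}|^2w^{\varepsilon}.
\end{equation*}
For data, \eqref{w_varepsilon-scatter-eq} and Proposition \ref{w-scatter-varphi} show that $w^{\varepsilon}-\varepsilon w_1$ has zero scattering data at $-\infty$. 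Hence $r^{\varepsilon}$ satisfies the Duhamel formula from $-\infty$ with no free term.

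\textbf{Step 1: candidate $w_2$.} Define $w_2$ as the $\S^1$ solution of \eqref{w_2-eq} with zero data at $-\infty$. It exists because the linear operator $w\mapsto 2a|u|^2w+au^2\bar w$ is handled exactly as for $w_1$: split $\mathbb{R}$ into the $J$ intervals $\I_j$ of Theorem \ref{perturbation-thm}, on which $\|\langle\nabla\rangle u\|_{\L^5_{\I_j}\L^{30/11}_x}<\eta$. The forcing $2au|w_1|^2+a\bar u w_1^2$ is bounded in $\L^{5/3}_t\W^{1,30/23}_x$ by $C\|a\|_{\W^{1,\infty}_x}\|u\|\,\|w_1\|^2$ in the norms used there, so by \eqref{w_1-S1-bound} it is at most $C_{a,u}$. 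Iterating over the intervals gives \eqref{w_2-S1-bound}.

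\textbf{Step 2: convergence of $r^{\varepsilon}$.} Set $d^{\varepsilon}=r^{\varepsilon}-w_2$. It solves the same linear equation with forcing
\begin{equation*}
2au\left(\tfrac{|w^{\varepsilon}|^2}{\varepsilon^2}-|w_1|^2\right)+a\bar u\left(\tfrac{(w^{\varepsilon})^2}{\varepsilon^2}-w_1^2\right)+\tfrac{a|w^{\varepsilon}|^2w^{\varepsilon}}{\varepsilon^2}.
\end{equation*}
The cubic term is $O(\varepsilon)$ in the dual Strichartz norm by \eqref{w-(p,q)-norm}. The quadratic differences factor as, e.g., $\frac{w^\varepsilon}{\varepsilon}\left(\frac{w^\varepsilon}{\varepsilon}-w_1\right)+w_1\left(\frac{w^\varepsilon}{\varepsilon}-w_1\right)$. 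Proposition \ref{w_1} gives $\|\frac{w^{\varepsilon}}{\varepsilon}-w_1\|_{\S^1}\le C_{a,u}\varepsilon$ (let $\varepsilon_m\to0$ in its Cauchy bound), so these are also $O(\varepsilon)$. The same interval-by-interval bootstrap as in Theorem \ref{perturbation-thm}, now purely linear in $d^\varepsilon$, yields $\|d^{\varepsilon}\|_{\S^1}\le C_{a,u}\varepsilon\to0$. This establishes \eqref{def-w_2} in $\S^1$ and, by construction, \eqref{w_2-eq}.

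\textbf{Step 3: tail decay \eqref{w_2_scatter}.} Apply Strichartz on $(-\infty,t)$ to the Duhamel formula for $w_2$, which has no free term:
\begin{equation*}
\|w_2\|_{\S^1((-\infty,t))}\lesssim \|a\|_{\W^{1,\infty}_x}\,\|u\|_{X(-\infty,t)}\left(\|u\|_{X(-\infty,t)}\|w_2\|_{\S^1}+\|w_1\|^2_{\S^1}\right),
\end{equation*}
where $X$ denotes the $\L^5_{t,x}$ and $\L^5_t\W^{1,30/11}_x$ norms. Since $u$ has finite global spacetime norms (Remark \ref{Strichartz_norm_finite}), $\|u\|_{X(-\infty,t)}\to0$ as $t\to-\infty$, which gives \eqref{w_2_scatter}.

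\textbf{Main obstacle.} The delicate point is justifying that $r^{\varepsilon}$ and $w_2$ have no free term at $-\infty$, i.e. that the Duhamel representation from $-\infty$ is legitimate. It rests on \eqref{w_varepsilon-scatter-eq} together with Proposition \ref{w-scatter-varphi}, combined with the tail smallness of $u$. The remaining work is bookkeeping of the fractional Leibniz estimates already carried out in Theorem \ref{perturbation-thm}.
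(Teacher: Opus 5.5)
Your proof is correct and follows essentially the paper's route: the Duhamel formula from $-\infty$ with no free term, then a Strichartz/bootstrap argument over intervals where $u$ is small, driven by \eqref{w-(p,q)-norm} and the $O(\varepsilon)$ closeness of $w^{\varepsilon}/\varepsilon$ to $w_1$. The differences are organizational. You construct $w_2$ first as the zero-data solution of \eqref{w_2-eq} and show $\|r^{\varepsilon}-w_2\|_{\S^1}=O(\varepsilon)$, so the equation holds by construction, whereas the paper proves $r^{\varepsilon}$ is Cauchy and passes to the limit. You also take the tail decay from the smallness of $u$ on $(-\infty,t)$, while the paper takes it from the smallness of $w_1$ there.
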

	\begin{proof}
		By \eqref{w_varepsilon-varphi} and \eqref{w-formula-eq} and Duhamel formula, we have
		$$\tfrac{w^{\varepsilon} - \varepsilon w_{1}}{\varepsilon^2} = -\i \int_{-\infty}^{t} \e^{\i(t-s)\Delta}\left[ 2a|u|^2\left(\tfrac{w^{\varepsilon} - \varepsilon w_{1}}{\varepsilon^2}\right) + a u^2 \left(\tfrac{\overline{w^{\varepsilon}} - \varepsilon \overline{w_{1}}}{\varepsilon^2}\right) + 2au\tfrac{|w^{\varepsilon}|^2}{\varepsilon^2} + a\bar{u}\tfrac{(w^{\varepsilon})^2}{\varepsilon^2} + a\tfrac{|w^{\varepsilon}|^2w^{\varepsilon}}{\varepsilon^2}\right]\ \mathrm{d}s.$$
		By a bootstrap argument, using Strichartz estimates and \eqref{w-(p,q)-norm}, we can obtain
		$$\left\|\tfrac{w^{\varepsilon_n} - \varepsilon_n w_{1}}{\varepsilon^2_n} - \tfrac{w^{\varepsilon_m}-\varepsilon_m w_{1}}{\varepsilon^2_m}\right\|_{\S^1} \leq C_{a,u}\left(\left\|\tfrac{w^{\varepsilon_n}}{\varepsilon_n} - \tfrac{w^{\varepsilon_m}}{\varepsilon_m}\right\|_{\S^1} + \varepsilon_n + \varepsilon_m\right),$$
		which tends to $0$ as $\varepsilon_n,\varepsilon_m \to 0$. Thus $\left\{\tfrac{w^{\varepsilon} - \varepsilon w_1}{\varepsilon^2}\right\}$ is a Cauchy sequence in $\S^1$ and \eqref{def-w_2} is well-defined in $\S^1$. \eqref{w_2-eq} follows from \eqref{w_varepsilon_solves} and \eqref{w_1-eq}. We can also use Duhamel formula, bootstrap argument and Strichartz estimates to obtain
		$$\|w_{2}\|_{\S^1(\I_T)} \leq C_{a,u} \|\langle \nabla \rangle w_{1}\|^2_{\L^5_{\I_T}\L^{30/11}_x}$$
		for any time interval $\I_T = (-\infty,T)$, $T\in \mathbb{R}$, which yields \eqref{w_2-S1-bound} and \eqref{w_2_scatter}.
	\end{proof}
	\begin{rmk}\label{w_2 -formula}
		$w_2(t)$ can be represented by the formula
		\begin{equation}\label{w_2-formula-eq}
			w_2(t) = -\i\int_{-\infty}^{t} \e^{\i(t-s)\Delta}\left[2a|u|^2w_2 + au^2\overline{w_2} + 2au|w_1|^2 + a\bar{u}(w_1)^2\right]\ \mathrm{d}s.
		\end{equation}
	\end{rmk}
	\begin{prop}\label{w_3}
		\begin{equation}\label{def-w_3}
			w_3(t,x) := \lim_{\varepsilon\to 0}\frac{w^{\varepsilon} - \varepsilon w_1 - \varepsilon^2 w_2}{\varepsilon^3}
		\end{equation}
		is a well-defined $\S^1$ function solving
		\begin{equation}\label{w_3-eq}
			(\i\partial_t + \Delta)w_{3} = 2a|u|^2 w_{3} + au^2\overline{w_{3}} +2au(\overline{w_{1}}w_{2} + w_{1}\overline{w_{2}}) + 2a\overline{u}w_{1}w_{2} + a|w_{1}|^2w_{1}
		\end{equation}
		with
		\begin{equation}\label{w_3-S1-bound}
			\|w_3\|_{\S^1}\leq C_{a,u},
		\end{equation}
		and
		\begin{equation}\label{w_3_scatter}
			\lim_{t\to -\infty}\|w_3\|_{\S^1((-\infty,t))} = 0.
		\end{equation}
	\end{prop}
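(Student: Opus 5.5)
We follow the same scheme as in Propositions \ref{w_1} and \ref{w_2}. Set
$$r^{\varepsilon} := \tfrac{w^{\varepsilon} - \varepsilon w_1 - \varepsilon^2 w_2}{\varepsilon^3}.$$
Subtracting the Duhamel formulas \eqref{w_varepsilon-varphi}, \eqref{w-formula-eq}, \eqref{w_2-formula-eq}, the linear free parts cancel, because $w^{\varepsilon}$ and $\varepsilon w_1$ share the free term $\e^{\i t\Delta}(\varepsilon\varphi)$ and $w_2$ has none. Hence
$$r^{\varepsilon}(t) = -\i\int_{-\infty}^{t}\e^{\i(t-s)\Delta}\Big[2a|u|^2 r^{\varepsilon} + au^2\overline{r^{\varepsilon}} + \tfrac{1}{\varepsilon^3}\big(2au|w^{\varepsilon}|^2 + a\bar u (w^{\varepsilon})^2 - \varepsilon^2(2au|w_1|^2 + a\bar u w_1^2)\big) + \tfrac{a|w^{\varepsilon}|^2w^{\varepsilon}}{\varepsilon^3}\Big]\mathrm{d}s.$$

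The next step is to expand the quadratic and cubic source terms. Write $w^{\varepsilon} = \varepsilon w_1 + \varepsilon^2 w_2 + \varepsilon^3 r^{\varepsilon}$. Then
$$\tfrac{|w^{\varepsilon}|^2 - \varepsilon^2|w_1|^2}{\varepsilon^3} = \overline{w_1}w_2 + w_1\overline{w_2} + O\big(\varepsilon(|w_2|^2 + |w_1||r^{\varepsilon}|) + \varepsilon^2|w_2||r^{\varepsilon}| + \varepsilon^3|r^{\varepsilon}|^2\big),$$
and similarly $\tfrac{(w^{\varepsilon})^2 - \varepsilon^2 w_1^2}{\varepsilon^3} = 2w_1w_2 + O(\cdot)$. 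For the cubic term, $\tfrac{|w^{\varepsilon}|^2w^{\varepsilon}}{\varepsilon^3} = |w_1|^2w_1 + O(\varepsilon(\cdots))$. Passing to the limit therefore gives exactly the forcing in \eqref{w_3-eq}.

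To make this rigorous, first derive a uniform $\S^1$ bound on $r^{\varepsilon}$. Split $\mathbb{R}$ into $J = J(C_{a,u},\eta)$ intervals on which $\|\langle\nabla\rangle u\|_{\L^5\L^{30/11}_x}<\eta$. On each interval, apply Strichartz with the dual pair $\L^{5/3}_t\L^{30/23}_x$ and the fractional Leibniz estimate, exactly as in Theorem \ref{perturbation-thm}. This uses $\|w_1\|_{\S^1},\|w_2\|_{\S^1}\le C_{a,u}$ (from \eqref{w_1-S1-bound} and \eqref{w_2-S1-bound}) and $\|w^{\varepsilon}\|_{\S^1}\le C_{a,u}\varepsilon$. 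The linear term $2a|u|^2r^{\varepsilon}+au^2\overline{r^{\varepsilon}}$ contributes $\lesssim \eta^2\|r^{\varepsilon}\|$ and is absorbed. The source is bounded by $C_{a,u}$, and the error terms are $\varepsilon\|r^{\varepsilon}\| + \varepsilon^2\|r^{\varepsilon}\|^2 + \dots$. A bootstrap on each interval, iterated $J$ times, then gives $\|r^{\varepsilon}\|_{\S^1}\le C_{a,u}$ for $\varepsilon$ small.

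Next, show that $r^{\varepsilon}$ is Cauchy. Take the difference $r^{\varepsilon_n}-r^{\varepsilon_m}$, again absorb the linear part on each small interval, and bound the remainder. The estimate has the form
$$\|r^{\varepsilon_n}-r^{\varepsilon_m}\|_{\S^1}\le C_{a,u}\Big(\varepsilon_n+\varepsilon_m + \big\|\tfrac{w^{\varepsilon_n}-\varepsilon_n w_1}{\varepsilon_n^2}-\tfrac{w^{\varepsilon_m}-\varepsilon_m w_1}{\varepsilon_m^2}\big\|_{\S^1}\Big),$$
whose right-hand side tends to $0$ by Proposition \ref{w_2}. This yields the limit $w_3\in\S^1$. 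The equation \eqref{w_3-eq} then follows by passing to the limit in the Duhamel formula, using the expansion above. The bound \eqref{w_3-S1-bound} follows from the uniform bound on $r^{\varepsilon}$.

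For \eqref{w_3_scatter}, rerun the estimate on $\I_T=(-\infty,T)$, which yields
$$\|w_3\|_{\S^1(\I_T)}\le C_{a,u}\Big(\|\langle\nabla\rangle w_1\|_{\L^5_{\I_T}\L^{30/11}_x}\|w_2\|_{\S^1(\I_T)} + \|\langle\nabla\rangle w_1\|^3_{\L^5_{\I_T}\L^{30/11}_x}\Big).$$
The right-hand side tends to $0$ by \eqref{w_2_scatter} and the finiteness of $\|w_1\|_{\S^1}$. Note that the cubic term $|w_1|^2w_1$ does not carry a small factor from $u$, so here we need the time-integrability of $w_1$ itself.

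The main obstacle is the bookkeeping of the error terms. They must all carry a positive power of $\varepsilon$ in Strichartz norms, and the linear part must be absorbed; this is why the interval splitting is needed rather than a global estimate, since $u$ is large.
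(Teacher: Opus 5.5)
Your proposal is correct and follows essentially the same route as the paper: a Duhamel formula for the third-order quotient, Strichartz estimates with absorption of the linear part $2a|u|^2 r^{\varepsilon}+au^2\overline{r^{\varepsilon}}$, a Cauchy estimate in $\S^1$, passage to the limit in the Duhamel formula, and a tail bound on $(-\infty,T)$ in terms of $w_1$ and $w_2$. The only difference is minor: you first bootstrap a uniform bound on $r^{\varepsilon}$ by expanding the source in powers of $\varepsilon$, whereas the paper writes the source through the lower-order quotients $w^{\varepsilon}/\varepsilon$ and $(w^{\varepsilon}-\varepsilon w_1)/\varepsilon^2$, which already converge, so its Cauchy estimate is linear in the difference and needs no separate uniform bound.
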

	\begin{proof}
		By \eqref{w_varepsilon-varphi}, \eqref{w-formula-eq} and \eqref{w_2-formula-eq}, we have
		$$\begin{aligned}
			\tfrac{w^{\varepsilon} - \varepsilon w_{1} - \varepsilon^2 w_2}{\varepsilon^3} = -\i \int_{-\infty}^{t} \e^{\i(t-s)\Delta}\Big[ &2a|u|^2\left(\tfrac{w^{\varepsilon} - \varepsilon w_{1} - \varepsilon^2 w_2}{\varepsilon^3}\right) + a u^2 \left(\tfrac{\overline{w^{\varepsilon}} - \varepsilon \overline{w_{1}} - \varepsilon^2\overline{w_2}}{\varepsilon^3}\right) + 2au\left(\tfrac{|w^{\varepsilon}|^2 - |\varepsilon w_1|^2}{\varepsilon^3}\right)\\
			&+ a\bar{u}\left(\tfrac{(w^{\varepsilon})^2 - (\varepsilon w_1)^2}{\varepsilon^3}\right) + a\tfrac{|w^{\varepsilon}|^2w^{\varepsilon}}{\varepsilon^3}\Big]\ \mathrm{d}s.
		\end{aligned}$$
		By a bootstrap argument, using Strichartz estimates and \eqref{w-(p,q)-norm}, we can obtain
		$$\left\|\tfrac{w^{\varepsilon_n}-\varepsilon_n w_{1}-\varepsilon^2_n w_2}{\varepsilon^3_n} - \tfrac{w^{\varepsilon_m}-\varepsilon_m w_{1}-\varepsilon^2_m w_2}{\varepsilon^3_m}\right\|_{\S^1} \leq C_{a,u}\left(\left\|\tfrac{w^{\varepsilon_n} - \varepsilon_n w_{1}}{\varepsilon^2_n} - \tfrac{w^{\varepsilon_m}-\varepsilon_m w_{1}}{\varepsilon^2_m}\right\|_{\S^1} + \left\|\tfrac{w^{\varepsilon_n}}{\varepsilon_n} - \tfrac{w^{\varepsilon_m}}{\varepsilon_m}\right\|_{\S^1}\right),$$
		which tends to $0$ as $\varepsilon_n,\varepsilon_m \to 0$. Thus $\left\{\tfrac{w^{\varepsilon} - \varepsilon w_1 - \varepsilon^2 w_2}{\varepsilon^3}\right\}$ is a Cauchy sequence in $\S^1$ and \eqref{def-w_3} is well-defined in $\S^1$. \eqref{w_3-eq} follows from \eqref{w_varepsilon_solves}, \eqref{w_1-eq} and \eqref{w_2-eq}.  We can also use Duhamel formula, bootstrap argument and Strichartz estimates to obtain
		$$\|w_{3}\|_{\S^1(\I_T)} \leq C_{a,u} \left(\|\langle \nabla \rangle w_{1}\|_{\L^5_{\I_T}\L^{30/11}_x}\|\langle \nabla \rangle w_{2}\|_{\L^5_{\I_T}\L^{30/11}_x} + \|\langle \nabla \rangle w_{1}\|^3_{\L^5_{\I_T}\L^{30/11}_x}\right)$$
		for any time interval $\I_T = (-\infty,T)$, $T\in \mathbb{R}$, which yields \eqref{w_3-S1-bound} and \eqref{w_3_scatter}.
	\end{proof}
	
	\section{The inverse scattering problem}
	This section is the main contribution of this article. We prove the scattering map determines the nonlinearity in the following sense:
	\begin{thm}\label{S-map-determine-nonlinearity}
		Assume $u_a(t)$ is a global scattering solution of
		\begin{equation}\label{nls-a}
			(\i \partial_t + \Delta)u_{a} = a|u_a|^2u_a,
		\end{equation} 
		and $u_b(t)$ is a global scattering solution of
		\begin{equation}\label{nls-b}
			(\i \partial_t + \Delta)u_{b} = b|u_b|^2u_b,
		\end{equation}
		where $a,b\geq 0$ are continuous, $a,b,\nabla a,\nabla b \in \L^{\infty}_x$ and $x\cdot \nabla a, x\cdot \nabla b \leq 0$. Assume
		$$\lim_{t\to -\infty}\|u_a(t) - \e^{\i t\Delta}u_{-}\|_{\H^1_x} = \lim_{t\to -\infty}\|u_b(t) - \e^{\i t\Delta}u_{-}\|_{\H^1_x} = 0$$
		for some $u_{-}\in \H^1_x$ and $u_{-}\nequiv 0$. Let $S_a, S_b$ be the scattering map corresponding to \eqref{nls-a} and \eqref{nls-b} respectively. For any $\varepsilon > 0$, if $S_a = S_b$ on $B_{\varepsilon}(u_{-})$, then $a\equiv b$.
	\end{thm}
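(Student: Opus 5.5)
The plan is to differentiate the identity $S_a=S_b$ on $B_{\varepsilon}(u_{-})$ three times at $u_{-}$ in a direction chosen so that the contribution of the large background is negligible. Then let this direction escape to temporal infinity, so that only the Strauss-type integral \eqref{key_integral} with $p=2$ survives, and finally evaluate it on Gaussians.

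\textbf{Step 1: third-order coefficient.} Fix $\varphi,\psi\in\H^1_x$ with $\|\varphi\|_{\H^1_x}\le 1$, set $v_{-}=u_{-}+\varepsilon\varphi$, and write $u=u_a$. By \eqref{scattering_map_eq} and \eqref{w_varepsilon_solves}, $S_a(v_{-})-S_a(u_{-})=\varepsilon\varphi-\i\int_{\mathbb{R}}\e^{-\i s\Delta}[a|u+w^\varepsilon|^2(u+w^\varepsilon)-a|u|^2u]\,\mathrm{d}s$. By Propositions \ref{w_1}, \ref{w_2}, \ref{w_3}, the $\S^1$ expansion $w^\varepsilon=\varepsilon w_1+\varepsilon^2w_2+\varepsilon^3w_3+o(\varepsilon^3)$ holds. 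Combined with the dual Strichartz estimate \eqref{dual-homogeneous-Strichartz-estimate}, it shows that $\varepsilon\mapsto\langle S_a(u_{-}+\varepsilon\varphi)-S_a(u_{-}),\psi\rangle$ has an expansion to order $\varepsilon^3$. Its $\varepsilon^3$-coefficient is
$$
\mathcal{C}_a(\varphi,\psi)=-\i\int_{\mathbb{R}}\big\langle 2a|u|^2w_3+au^2\overline{w_3}+2au(\overline{w_1}w_2+w_1\overline{w_2})+2a\bar u w_1w_2+a|w_1|^2w_1,\ \e^{\i s\Delta}\psi\big\rangle\,\mathrm{d}s .
$$
Since $S_a=S_b$ near $u_{-}$ and the expansion coefficients are unique, $\mathcal{C}_a(\varphi,\psi)=\mathcal{C}_b(\varphi,\psi)$ for all such $\varphi,\psi$, although $u_a\neq u_b$ a priori.

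\textbf{Step 2: translating the data.} Replace $\varphi,\psi$ by $\e^{\i T\Delta}\varphi,\e^{\i T\Delta}\psi$ and let $T\to\pm\infty$; this is the setting of Lemma \ref{w^{nl}_uniform_decay}. The free part of $w_1^T$ is $\e^{\i(t+T)\Delta}\varphi$, which by \eqref{L5_est} and dominated convergence has Strichartz norm concentrated near $t\approx -T$. The background $u$ has small Strichartz norm there, by Remark \ref{Strichartz_norm_finite} applied to both time tails. Lemma \ref{w^{nl}_uniform_decay} (and its analogue on the forward tail, proved the same way) gives $w_1^T=\e^{\i(t+T)\Delta}\varphi+o_{\S^1}(1)$. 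The bounds $\|w_2^T\|_{\S^1}\lesssim C_{a,u}\|\langle\nabla\rangle w_1^T\|_{\L^5_t\L^{30/11}_x}\cdot\sup_{\text{region}}(\text{norm of }u)$ should show $w_2^T\to0$ and reduce $w_3^T$ to the Duhamel term generated by $a|w_1^T|^2w_1^T$. Then every term in $\mathcal{C}_a$ containing $u$ is a Hölder product of $u$ with quantities living where $u$ is small, so it tends to $0$. The term involving $w_3$ is handled by the same splitting, since its source concentrates where $u$ is small. After the change of variables $s\mapsto s-T$ this yields
$$
\lim_{T\to-\infty}\mathcal{C}_a(\e^{\i T\Delta}\varphi,\e^{\i T\Delta}\psi)=-\i\int_{\mathbb{R}}\langle a|\e^{\i s\Delta}\varphi|^2\e^{\i s\Delta}\varphi,\e^{\i s\Delta}\psi\rangle\,\mathrm{d}s .
$$
I expect this step to be the main obstacle: every mixed term must vanish uniformly, which requires a careful splitting of the time axis into the region where $u$ is small and a compact remainder where $\e^{\i(s+T)\Delta}\varphi$ is small in $\L^5_{t,x}$. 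I would redo the bootstrap of Theorem \ref{perturbation-thm} on these two pieces separately.

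\textbf{Step 3: Gaussian evaluation.} Step 2 gives $\int_{\mathbb{R}}\int (a-b)|\e^{\i s\Delta}\varphi|^2\,\e^{\i s\Delta}\varphi\,\overline{\e^{\i s\Delta}\psi}\,\mathrm{d}x\,\mathrm{d}s=0$ for all $\varphi,\psi$ (rescaled to unit norm, which is harmless by homogeneity). Take $\varphi=\psi$ to be the Gaussian $\e^{-|x-x_0|^2/2}$, translated by $x_0$, and use the explicit free evolution $|\e^{\i s\Delta}\varphi|^2=(1+4s^2)^{-3/2}\e^{-|x-x_0|^2/(1+4s^2)}$. The identity becomes $\int (a-b)(x)K(x-x_0)\,\mathrm{d}x=0$ for all $x_0$, with the positive radial kernel $K(y)=\int_{\mathbb{R}}(1+4s^2)^{-3}\e^{-2|y|^2/(1+4s^2)}\,\mathrm{d}s$. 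Using the family of Gaussians of width $\lambda$ (or, as the introduction suggests, comparing absolute values across the translated family), one obtains $K_\lambda*(a-b)\equiv0$ with $K_\lambda$ an approximate identity as $\lambda\to0$. Since $a-b$ is bounded and continuous, letting $\lambda\to0$ gives $a\equiv b$.
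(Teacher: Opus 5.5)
Your argument is correct. Steps 1--2 follow the paper's route: the paper also matches the $\varepsilon^3$ coefficients and tests with $\varphi=\psi=\e^{\i t_0\Delta}g_{x_0}$, $t_0\to+\infty$ (your $T\to+\infty$). It writes $w_{k,1}=z+r_k$ and uses time-translation invariance of $\int_{\mathbb{R}}\int(a-b)|z|^4$.

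The paper's Step 2 is leaner than yours. It never shows $w_2\to 0$ or that $r_k$ is small globally. It uses only three facts:
\begin{enumerate}[\upshape(1)]
\item uniform $\L^4_{t,x}$ bounds on $u_k$, $w_{k,i}$, $z$ and $r_k$;
\item every term of the remainder $\mathcal{E}_k$ contains a factor $u_k$ or $r_k$ and is paired with $\bar z$;
\item a split at $t=-t_0/2$. On the far past both $u_k$ and $r_k$ are small, the latter by causality via Lemma \ref{w^{nl}_uniform_decay}. Afterwards $\|z\|_{\L^4}\lesssim t_0^{-1/2}$.
\end{enumerate}
This causality shortcut is lost if you send the wave to the far future, as in your displayed $T\to-\infty$ limit. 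There $r$ near the concentration time depends on the interaction with $u$ at intermediate times. So neither Lemma \ref{w^{nl}_uniform_decay} nor a forward-tail analogue gives what you need. You must instead prove your global claim $w_1^T=z+o_{\S^1}(1)$ by showing the source $2a|u|^2z+au^2\bar z$ is small in dual Strichartz norms, because $z$ is dispersed wherever $u$ is large. That argument does work, but choosing $T\to+\infty$ avoids it.

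The genuine difference is Step 3. The paper uses a single Gaussian width and observes $K_g\in\L^1$ with $\widehat{K_g}>0$ everywhere. It then deduces $a\equiv b$ from $K_g*(a-b)\equiv 0$ by Wiener's Tauberian theorem, which needs only $a-b\in\L^\infty$. You vary the width instead. Scaling gives $K_\lambda(y)=\lambda^2K(y/\lambda)$, so the normalized kernels form an approximate identity, and boundedness plus continuity of $a-b$ finish the proof pointwise. Your route is more elementary, since it avoids the Tauberian theorem. The price is using the continuity hypothesis and running Step 2 for every width. That is harmless: Step 2 holds for all Schwartz data, and the limiting cubic identity is homogeneous, so the normalization $\|\varphi\|_{\H^1_x}\le 1$ costs nothing.
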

	
	\begin{lem}\label{func-test-scattering-map}
		Define
		\begin{equation}\label{func-test-scattering-map-eq}
			f^{\varepsilon}_{a}(\varphi,\psi) := \i\langle S_a v_{-} - S_{a} u_{-} - I(v_{-} - u_{-}),\psi \rangle.
		\end{equation}
		Then
		\begin{equation}\label{func-test-scattering-map-eq-2}
			\begin{aligned}
				f^{\varepsilon}_{a}(\varphi,\psi) = &\left[\int_{\mathbb{R}}\langle F^{\varphi}_{a}(t,x), \e^{\i t\Delta}\psi \rangle\ \mathrm{d}t \right]\varepsilon + \left[\int_{\mathbb{R}}\langle G^{\varphi}_{a}(t,x), \e^{\i t\Delta}\psi \rangle\ \mathrm{d}t \right] \varepsilon^2\\
				&+ \left[\int_{\mathbb{R}}\langle H^{\varphi}_{a}(t,x), \e^{\i t\Delta}\psi \rangle\ \mathrm{d}t \right]\varepsilon^3 + o(\varepsilon^3),
			\end{aligned}
		\end{equation}
		where $\tfrac{o(\varepsilon^3)}{\varepsilon^3}\to 0$ as $\varepsilon\to 0$ and
		$$\begin{aligned}
			F^{\varphi}_{a}(t,x) &= 2a|u_a|^2 w_{a,1} + a(u_a)^2\overline{w_{a,1}},\\
			G^{\varphi}_{a}(t,x) &= 2a|u_a|^2 w_{a,2} + a(u_{a})^2\overline{w_{a,2}} + 2au_{a}|w_{a,1}|^2 + a\overline{u_{a}}(w_{a,1})^2,\\
			H^{\varphi}_{a}(t,x) &= 2a|u_a|^2w_{a,3} + a(u_a)^2\overline{w_{a,3}} + 2au_{a}(\overline{w_{a,1}}w_{a,2} + w_{a,1}\overline{w_{a,2}}) + 2a\overline{u_a}w_{a,1}w_{a,2} + a|w_{a,1}|^2w_{a,1},
		\end{aligned}$$
		and $w_{a,1}$, $w_{a,2}$ and $w_{a,3}$ correspond to \eqref{def-w_1}, \eqref{def-w_2} and \eqref{def-w_3} respectively with respect to \eqref{nls-a}.
	\end{lem}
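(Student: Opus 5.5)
The plan is to start from the scattering map formula \eqref{scattering_map_eq}, applied to both $v_{-}=u_{-}+\varepsilon\varphi$ and $u_{-}$ (here $I$ denotes the identity, so $I(v_{-}-u_{-})=\varepsilon\varphi$). Subtracting the two formulas gives
$$S_a v_{-} - S_a u_{-} - \varepsilon\varphi = -\i\int_{\mathbb{R}}\e^{-\i s\Delta}\big[a|v^{\varepsilon}|^2v^{\varepsilon} - a|u_a|^2u_a\big](s)\ \mathrm{d}s .$$
Pairing with $\psi$, multiplying by $\i$, and moving $\e^{-\i s\Delta}$ to the other side (it is unitary on $\L^2_x$) gives
$$f^{\varepsilon}_a(\varphi,\psi)=\int_{\mathbb{R}}\langle N^{\varepsilon}(t),\e^{\i t\Delta}\psi\rangle\,\mathrm{d}t,$$
where $N^{\varepsilon}$ is the right-hand side of \eqref{w_varepsilon_solves}. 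Moving the propagator inside the time integral is justified by the dual Strichartz estimate \eqref{dual-homogeneous-Strichartz-estimate}. The reason is that $N^{\varepsilon}\in\L^{5/3}_t\L^{30/23}_x$ (indeed $\langle\nabla\rangle N^{\varepsilon}$ lies in this space) and $\e^{\i t\Delta}\psi\in\L^{5/2}_t\L^{30/7}_x$, which is the dual pair.

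Next, substitute $w^{\varepsilon}=\varepsilon w_1+\varepsilon^2w_2+\varepsilon^3w_3+r^{\varepsilon}$ into $N^{\varepsilon}$. By Propositions \ref{w_1}, \ref{w_2} and \ref{w_3}, $\|r^{\varepsilon}\|_{\S^1}=o(\varepsilon^3)$; this is exactly what the Cauchy-sequence convergence in \eqref{def-w_3} says. Expanding the five terms $2a|u|^2w+au^2\bar w+2au|w|^2+a\bar u w^2+a|w|^2w$ and collecting powers of $\varepsilon$, the $\varepsilon^1$ coefficient is $F^{\varphi}_a$ and the $\varepsilon^2$ coefficient is $G^{\varphi}_a$. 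For $\varepsilon^3$:
\begin{itemize}
\item the linear terms contribute $2a|u|^2w_3+au^2\overline{w_3}$;
\item $2au|w|^2$ contributes $2au(\overline{w_1}w_2+w_1\overline{w_2})$;
\item $a\bar u w^2$ contributes $2a\bar u w_1w_2$;
\item the cubic term contributes $a|w_1|^2w_1$.
\end{itemize}
These sum to $H^{\varphi}_a$, which matches the source terms in \eqref{w_1-eq}, \eqref{w_2-eq} and \eqref{w_3-eq}.

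The remaining step, and the only real obstacle, is to show that everything left over contributes $o(\varepsilon^3)$ after integrating against $\e^{\i t\Delta}\psi$. The remainder consists of two kinds of terms:
\begin{itemize}
\item terms involving $r^{\varepsilon}$, such as $2a|u|^2r^{\varepsilon}$, $au^2\overline{r^{\varepsilon}}$, and cross terms of $r^{\varepsilon}$ with $u$ and $w^{\varepsilon}$;
\item pure products of $w_1,w_2,w_3$ whose total order is at least $\varepsilon^4$.
\end{itemize}
Each is estimated by Hölder in $\L^{5/3}_t\L^{30/23}_x$, in the same way as the nonlinear estimate in the proof of Theorem \ref{perturbation-thm}:
$$\|a\,\alpha\beta\gamma\|_{\L^{5/3}_t\L^{30/23}_x}\lesssim\|a\|_{\L^\infty_x}\|\alpha\|_{\L^5_{t,x}}\|\beta\|_{\L^5_{t,x}}\|\langle\nabla\rangle\gamma\|_{\L^5_t\L^{30/11}_x},$$
together with the Sobolev embedding \eqref{Sobolev-embedding-1}-type bound $\L^5_{t,x}\lesssim\S^1$. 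The $u$ factors are bounded by \eqref{strichartz-norm-finite}. The factors $w_j$ are bounded by \eqref{w_1-S1-bound}, \eqref{w_2-S1-bound} and \eqref{w_3-S1-bound}, and $w^{\varepsilon}$ by \eqref{w-(p,q)-norm}. Every remainder term then either carries a factor $\|r^{\varepsilon}\|_{\S^1}=o(\varepsilon^3)$ or an explicit $\varepsilon^k$ with $k\ge4$. Finally, pairing with $\e^{\i t\Delta}\psi$ and applying \eqref{homogeneous-Strichartz-estimate} gives a bound of $C_{a,u}\|\psi\|_{\L^2_x}\,o(\varepsilon^3)$, which completes the proof.
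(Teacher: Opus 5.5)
Your proposal is correct and follows essentially the paper's route: write $f^{\varepsilon}_a$ as $\int_{\mathbb{R}}\langle N^{\varepsilon}(t),\e^{\i t\Delta}\psi\rangle\,\mathrm{d}t$ using \eqref{scattering_map_eq}, peel off the exact $\varepsilon$ and $\varepsilon^2$ coefficients $F^{\varphi}_a$ and $G^{\varphi}_a$, and use the $\S^1$ convergence of the difference quotients from Propositions \ref{w_1}, \ref{w_2} and \ref{w_3} to identify the $\varepsilon^3$ coefficient as $H^{\varphi}_a$. The only difference is bookkeeping: the paper keeps $w^{\varepsilon}$ unexpanded in the remainder and passes to the limit via the factorizations of $|w^{\varepsilon}|^2-|\varepsilon w_{a,1}|^2$ and $(w^{\varepsilon})^2-(\varepsilon w_{a,1})^2$, whereas you expand fully with an explicit remainder $r^{\varepsilon}$, which is $o(\varepsilon^3)$ in $\S^1$, and you spell out the H\"older/Strichartz estimates that the paper leaves implicit.
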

	\begin{proof}
		$f^{\varepsilon}_{a}(\varphi,\psi)$ can be represented by
		\begin{equation}\label{func-test-scattering-map-eq-3}
			\begin{aligned}
				f^{\varepsilon}_{a}(\varphi,\psi) = &\left[\int_{\mathbb{R}}\langle F^{\varphi}_{a}(t,x), \e^{\i t\Delta}\psi \rangle\ \mathrm{d}t \right]\varepsilon + \left[\int_{\mathbb{R}}\langle G^{\varphi}_{a}(t,x), \e^{\i t\Delta}\psi \rangle\ \mathrm{d}t \right] \varepsilon^2\\
				&+ \int_{\mathbb{R}} \langle h^{\varepsilon,\varphi}_{a,1}(t,x) + h^{\varepsilon,\varphi}_{a,2}(t,x), \e^{\i t \Delta}\psi \rangle\ \mathrm{d}t,
			\end{aligned}
		\end{equation}
		where
		$$\begin{aligned}
			h^{\varepsilon,\varphi}_{a,1}(t,x) &= 2a|u_{a}|^2(w^{\varepsilon}_{a} - \varepsilon w_{a,1} - \varepsilon^2 w_{a,2}) + a(u_a)^2(\overline{w^{\varepsilon}_{a}} - \varepsilon \overline{w_{a,1}} - \varepsilon^2 \overline{w_{a,2}})\label{func-test-scattering-map-line2}\\
			h^{\varepsilon,\varphi}_{a,2}(t,x) &= 2a u_{a} (|w^{\varepsilon}_{a}|^2 - |\varepsilon w_{a,1}|^2) + a\overline{u_{a}}[(w^{\varepsilon}_a)^2 - (\varepsilon w_{a,1})^2] + a|w^{\varepsilon}_{a}|^2w^{\varepsilon}_{a}.\label{func-test-scattering-map-line3}
		\end{aligned}$$
		Since
		$$|w^{\varepsilon}_{a}|^2 - |\varepsilon w_{a,1}|^2 = (w^{\varepsilon}_{a} - \varepsilon w_{a,1})\overline{w^{\varepsilon}_{a}} + (\overline{w^{\varepsilon}_{a}} - \varepsilon \overline{w_{a,1}})(\varepsilon w_{a,1}),$$
		and
		$$(w^{\varepsilon}_{a})^2 - (\varepsilon w_{a,1})^2 = (w^{\varepsilon}_{a} - \varepsilon w_{a,1})(w^{\varepsilon}_{a} + \varepsilon w_{a,1}),$$
		it turns out that
		$$\int_{\mathbb{R}}\langle H^{\varphi}_{a}(t,x), \e^{\i t\Delta}\psi \rangle\ \mathrm{d}t = \lim_{\varepsilon\to 0} \varepsilon^{-3} \left[\int_{\mathbb{R}} \langle h^{\varepsilon,\varphi}_{a,1}(t,x) + h^{\varepsilon,\varphi}_{a,2}(t,x), \e^{\i t \Delta}\psi \rangle\ \mathrm{d}t\right],$$
		and the lemma follows.
	\end{proof}
	
	The free Schr\"odinger evolution of Gaussian data can be formulated explicitly (c.f. \cite{V-2014}):
	\begin{prop}\label{test-function-formula}
		Let
		$$g(x) = \exp\left\{-\tfrac{|x|^2}{4}\right\}.$$
		Then
		\begin{equation}\label{test-function-formula-eq}
			\e^{\i t\Delta}g(x) = (1+ \i t)^{-3/2}\exp\left\{-\tfrac{|x|^2}{4(1+ \i t)}\right\}.
		\end{equation}
	\end{prop}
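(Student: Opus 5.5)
The most direct route is through the Fourier-side definition $\e^{\i t\Delta} = \mathcal{F}^{-1}\e^{-\i t|\xi|^2}\mathcal{F}$ together with the one-dimensional Gaussian integral, extended to complex parameters by analytic continuation. Throughout, $(1+\i t)^{-3/2}$ means the principal branch. Since $\mathrm{Re}(1+\i t) = 1 > 0$, this branch is continuous in $t$ and equals $1$ at $t=0$.

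\emph{Step 1: the key integral.} For $z\in\mathbb{C}$ with $\mathrm{Re}\, z>0$ and $y\in\mathbb{R}^3$, I will show
$$\int_{\mathbb{R}^3}\e^{-z|\xi|^2+\i y\cdot\xi}\ \mathrm{d}\xi = \left(\tfrac{\pi}{z}\right)^{3/2}\e^{-|y|^2/(4z)}.$$
The integral factorizes into three one-dimensional integrals. In one dimension, for real $z>0$, completing the square and shifting the contour gives $\int_{\mathbb{R}}\e^{-z\xi^2+\i y\xi}\,\mathrm{d}\xi = (\pi/z)^{1/2}\e^{-y^2/(4z)}$. Both sides are holomorphic in $z$ on the right half-plane:
\begin{itemize}
\item the left side by dominated convergence, since $|\e^{-z\xi^2}| = \e^{-(\mathrm{Re}\,z)\xi^2}$ and this bound is locally uniform in $z$;
\item the right side because it uses the principal square root.
\end{itemize}
Since the two sides agree on $(0,\infty)$, the identity theorem extends the formula to all $\mathrm{Re}\, z>0$. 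This analytic-continuation step, and in particular getting the branch of $z^{3/2}$ right, is the only real point of care in the proof.

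\emph{Step 2: compute $\hat g$ and evolve.} Applying Step 1 with $z=1/4$ and $y=-\xi$ gives
$$\hat g(\xi) = (2\pi)^{-3/2}(4\pi)^{3/2}\e^{-|\xi|^2} = 2^{3/2}\e^{-|\xi|^2}.$$
Hence
$$\mathcal{F}(\e^{\i t\Delta}g)(\xi) = 2^{3/2}\e^{-(1+\i t)|\xi|^2}.$$

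\emph{Step 3: invert.} The inverse Fourier transform of this function is
$$(2\pi)^{-3/2}\,2^{3/2}\int_{\mathbb{R}^3}\e^{-(1+\i t)|\xi|^2+\i x\cdot\xi}\ \mathrm{d}\xi.$$
Applying Step 1 with $z = 1+\i t$ and $y=x$, this equals
$$(2\pi)^{-3/2}\,2^{3/2}\,\pi^{3/2}(1+\i t)^{-3/2}\exp\left\{-\tfrac{|x|^2}{4(1+\i t)}\right\}.$$
The constants cancel exactly, since $(2\pi)^{-3/2}\,2^{3/2}\,\pi^{3/2} = 1$, which yields \eqref{test-function-formula-eq}.

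As a consistency check one can verify directly that $U(t,x) := (1+\i t)^{-3/2}\exp\{-|x|^2/(4(1+\i t))\}$ satisfies $\i\partial_t U + \Delta U = 0$ and $U(0)=g$. Because $U(t)\in \L^2_x$ for every $t$, uniqueness of $\L^2$ solutions of the free equation then gives the same conclusion independently.
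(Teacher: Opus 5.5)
Your proof is correct. The paper gives no argument for this proposition and simply quotes it from \cite{V-2014}, so you are supplying a self-contained derivation of a cited standard fact rather than departing from a proof in the text.

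The steps check out against the paper's conventions:
\begin{itemize}
\item With $\hat f(\xi)=(2\pi)^{-3/2}\int f(x)\e^{-\i\xi\cdot x}\,\mathrm{d}x$ you correctly get $\hat g(\xi)=2^{3/2}\e^{-|\xi|^2}$.
\item The multiplier $\e^{-\i t|\xi|^2}$ turns this into $2^{3/2}\e^{-(1+\i t)|\xi|^2}$.
\item The complex Gaussian integral is justified by holomorphy in $z$ on $\{\mathrm{Re}\, z>0\}$ together with the identity theorem. You use the principal branch, which is also the branch fixed by continuity from $t=0$.
\item The prefactor is then $(2\pi)^{-3/2}2^{3/2}\pi^{3/2}(1+\i t)^{-3/2}=(1+\i t)^{-3/2}$.
\end{itemize}

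Your consistency check is in fact an equally valid independent proof. After the substitution $s=1+\i t$, $U$ is a constant multiple of the heat kernel, so $\partial_t U=\i\partial_s U=\i\Delta U$. Combined with uniqueness for the free equation in $C(\mathbb{R};\L^2_x)$, this finishes the argument. That route avoids analytic continuation entirely. Its cost is the uniqueness statement, which itself follows by taking the spatial Fourier transform.

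One remark on the branch care you flag as the delicate point. In Section 4 the formula is only used through $|\e^{\i t\Delta}g(x)|^4=(1+t^2)^{-3}\exp\{-|x|^2/(1+t^2)\}$, and this does not depend on the choice of branch of $(1+\i t)^{-3/2}$. So the branch matters for the identity as stated, but not for the paper's application.
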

	
	\begin{proof}[Proof of Theorem {\rm \ref{S-map-determine-nonlinearity}}]
		If $S_a = S_b$ on $B_{\varepsilon_0}(u_{-})$, then
		$$\langle (S_a v_{-} - S_{a} u_{-}) - (S_{b} v_{-} - S_{b} u_{-}) ,\psi \rangle = 0$$
		for any $\varphi, \psi \in \mathcal{S}$, where $v_{-} = u_{-} + \varepsilon \varphi$.
		
		By Lemma \ref{func-test-scattering-map}, we have
		\begin{equation}\label{order3_equal}
			\int_{\mathbb{R}}\langle H^{\varphi}_{a}(t,x), \e^{\i t\Delta}\psi \rangle\ \mathrm{d}t = \int_{\mathbb{R}}\langle H^{\varphi}_{b}(t,x), \e^{\i t\Delta}\psi \rangle\ \mathrm{d}t
		\end{equation}
		for any $\varphi, \psi\in \mathcal{S}$.
		
		Let $g(x) = \e^{-\frac{|x|^2}{4}}$ and $g_{x_0}(x) = g(x-x_0)$. Take $\varphi = \psi = \e^{\i t_0 \Delta}g_{x_0}(x)$ where $x_0\in \mathbb{R}^3$ and $t_0 > 0$ will be chosen later, and write
		$$w^{t_0}_{k,1} = \e^{\i (t+t_0)\Delta}g_{x_0}(x) + w^{\text{nl},t_0}_{k,1}.$$
		To simplify the notation, we denote
		$$z = z_{t_0,x_0}(t,x) := \e^{\i (t+t_0)\Delta}g_{x_0}(x)$$
		and write
		$$w_{k,1} = z + r_k,$$
		where $r_k := w^{{\rm nl},t_0}_{k,1}$, $k = a,b$. Then
		$$H^{\varphi}_k = k|z|^2z + \mathcal{E}_k,$$
		where
		\begin{equation}\label{Ek}
			\begin{aligned}
				\mathcal{E}_k = &2k|u_k|^2w_{k,3} + ku^2_k\overline{w_{k,3}} + 2ku_k(\overline{w_{k,1}}w_{k,2} + w_{k,1}\overline{w_{k,2}})\\
				&+2k\overline{u_k}w_{k,1}w_{k,2} + k(|w_{k,1}|^2w_{k,1} - |z|^2z)
			\end{aligned}
		\end{equation}
		and
		\begin{equation}\label{diff_remainder}
			\begin{aligned}
				|w_{k,1}|^2w_{k,1} - |z|^2z &= |z+r_k|^2(z+r_k) - |z|^2z\\
				&=2|z|^2r_k + z^2\overline{r_k} + 2z|r_k|^2 + \bar{z}r^2_k + |r_k|^2r_k
			\end{aligned}
		\end{equation}
		By \eqref{order3_equal}, we have
		\begin{equation}\label{key_reduce_eq}
			\int_{\mathbb{R}}\int_{\mathbb{R}^3} (a-b)|z|^4\ \mathrm{d}x\mathrm{d}t = \int_{\mathbb{R}}\int_{\mathbb{R}^3} (\mathcal{E}_b - \mathcal{E}_a)\bar{z}\ \mathrm{d}x\mathrm{d}t.
		\end{equation}
		By Sobolev embedding,
		$$\|u\|_{\L^4_{t,x}} \lesssim \|\langle \nabla \rangle u\|_{\L^4_t\L^3_x} \lesssim \|u\|_{\S^1}.$$
		By Proposition \ref{w_1}, \ref{w_2} and \ref{w_3},
		\begin{equation}\label{L4_bound}
			\|u_k\|_{\L^4_{t,x}} + \sum_{i=1}^{3}\|w_{k,i}\|_{\L^4_{t,x}} + \|z\|_{\L^4_{t,x}} + \|r_k\|_{\L^4_{t,x}} \leq M,
		\end{equation}
		where $M$ does not depend on $t_0$ and $x_0$. For any time interval $\I$, by H\"older, \eqref{Ek} and \eqref{diff_remainder},
		\begin{equation}\label{remainder_est}
			\begin{aligned}
				\left|\int_{\I}\int_{\mathbb{R}^3} \mathcal{E}_k \bar{z}\ \mathrm{d}x\mathrm{d}t\right| \lesssim &\|k\|_{\W^{1,\infty}_x}\Big\{ \|u_k\|^2_{\L^4_{\I,x}}\|w_{k,3}\|_{\L^4_{t,x}}\|z\|_{\L^4_{\I,x}} + \|u_k\|_{\L^4_{\I,x}}\|w_{k,1}\|_{\L^4_{t,x}}\|w_{k,2}\|_{\L^4_{t,x}}\|z\|_{\L^4_{\I,x}}\\
				&+ [\|z\|^2_{\L^4_{\I,x}}\|r_k\|_{\L^4_{\I,x}} + \|z\|_{\L^4_{\I,x}}\|r_k\|^2_{\L^4_{\I,x}} + \|r_k\|^3_{\L^4_{\I,x}}]\|z\|_{\L^4_{\I,x}}\Big\}.
			\end{aligned}
		\end{equation}
		We split $\mathbb{R}$ into $\I_1 = (-\infty,-t_0/2)$ and $\I_2 = [-t_0/2,\infty)$. On $\I_1$,
		$$\left|\int_{\I_1}\int_{\mathbb{R}^3} \mathcal{E}_k \bar{z}\ \mathrm{d}x\mathrm{d}t\right| \leq C_M \left[\|u_k\|_{\L^4_{\I_1,x}} + \|r_k\|_{\L^4_{\I_1,x}}\right].$$
		By the integrability of $u_k$ in time and Lemma \ref{w^{nl}_uniform_decay},
		$$\lim_{t_0\to \infty} \|u_k\|_{\L^4_{t,x}((-\infty,-t_0/2))} = \lim_{t_0\to \infty}\|r_k\|_{\L^4_{t,x}((-\infty,-t_0/2))} = 0.$$
		On $\I_2$,
		$$\left|\int_{\I_2}\int_{\mathbb{R}^3} \mathcal{E}_k \bar{z}\ \mathrm{d}x\mathrm{d}t\right|\leq C_M \|z\|_{\L^4_{\I_2,x}},$$
		and by \eqref{test-function-formula-eq}
		$$
			\|z\|_{\L^4_{\I_2,x}} = \left(\int_{t_0/2}^{\infty}(1+t^2)^{-3}\int_{\mathbb{R}^3} \e^{-\frac{|x-x_0|^2}{1+t^2}}\ \mathrm{d}x\ \mathrm{d}t\right)^{1/4}\lesssim \left(\int_{t_0/2}^{\infty} \tfrac{1}{t^3}\ \mathrm{d}t\right)^{1/4}.$$
		Therefore $\|z\|_{\L^4_{\I_2,x}} \lesssim t^{-1/2}_0$, and we can obtain
		$$\left|\int_{\mathbb{R}}\int_{\mathbb{R}^3} \left[a(x) - b(x)\right]|\e^{\i(t+t_0)\Delta}g_{x_0}(x)|^4\ \mathrm{d}x\mathrm{d}t\right| \lesssim R(t_0)$$
		where $R(t_0)\to 0$ as $t_0\to \infty$. But by the translation invariance of the Lebesgue integral,
		$$\int_{\mathbb{R}}\int_{\mathbb{R}^3} \left[a(x) - b(x)\right]|\e^{\i(t+t_0)\Delta}g_{x_0}(x)|^4\ \mathrm{d}x\mathrm{d}t = \int_{\mathbb{R}}\int_{\mathbb{R}^3} \left[a(x) - b(x)\right]|\e^{\i t\Delta}g_{x_0}(x)|^4\ \mathrm{d}x\mathrm{d}t$$
		for any $t_0\in \mathbb{R}$. Thus by letting $t_0\to \infty$, we obtain
		\begin{equation}\label{final_step_eq}
			\int_{\mathbb{R}}\int_{\mathbb{R}^3} \left[a(x) - b(x)\right]|\e^{\i t\Delta}g_{x_0}(x)|^4\ \mathrm{d}x\mathrm{d}t = 0
		\end{equation}
		for every $x_0\in \mathbb{R}^3$. Let
		$$K_{g}(x) := \int_{\mathbb{R}} |\e^{\i t\Delta}g(x)|^4\ \mathrm{d}t.$$
		Then \eqref{final_step_eq} implies
		\begin{equation}\label{final_step_eq_2}
			\int_{\mathbb{R}^3} [a(x) - b(x)]K_g(x-x_0)\ \mathrm{d}x = 0,
		\end{equation}
		for every $x_0 \in \mathbb{R}^3$. By \eqref{test-function-formula-eq}, we have
		$$K_{g}(x) = \int_{\mathbb{R}} (1+t^2)^{-3} \exp\left(-\tfrac{|x|^2}{1+t^2}\right)\ \mathrm{d}t.$$
		Since
		$$\|K_g(x)\|_{\L^1_x} = C\int_{\mathbb{R}} (1+t^2)^{-3/2}\ \mathrm{d}t < \infty,$$
		and
		$$\widehat{K}_g(\xi) = C \int_{\mathbb{R}} (1+t^2)^{-3/2}\exp\left(-\tfrac{(1+t^2)|\xi|^2}{4}\right) > 0$$
		for every $\xi \in \mathbb{R}^3$, by Wiener's Tauberian theorem and \eqref{final_step_eq_2}, we obtain $a\equiv b$.
	\end{proof}

\end{document}